\definecolor{rouge}{rgb}{0.7,0.00,0.00}
\definecolor{vert}{rgb}{0.00,0.5,0.00}
\definecolor{bleu}{rgb}{0.00,0.00,0.8}
\newtheorem{theorem}{Theorem}[section]
\newtheorem*{theorem*}{Theorem}
\newtheorem{lemma}[theorem]{Lemma}
\newtheorem{proposition}[theorem]{Proposition}
\newtheorem{assumption}{Assumption}
\newtheorem{assumptionA}{A\kern-0.1mm}
\theoremstyle{definition}
\newtheorem{remark}[theorem]{Remark}
\def \eref#1{\hbox{(\ref{#1})}}
\numberwithin{equation}{section}
\def\geq{\geqslant}
\def\leq{\leqslant}
\def\RR{\mathbb{R}}
\def\PP{\mathbb{P}}
\def\EE{\mathbb{E}}
\def\NN{\mathbb{N}}
\def\vare{{\varepsilon}}
\def \eref#1{\hbox{(\ref{#1})}}
\def\EE{\mathbb{ E}}
\DeclareMathOperator{\e}{e}
\begin{document}

\title[Averaging principle for slow-fast singular SPDEs driven by $\alpha$-stable process]
{Strong averaging principle for a class of slow-fast singular SPDEs driven by $\alpha$-stable process}

\author{Xiaobin Sun}
\curraddr[Sun, X.]{ School of Mathematics and Statistics and Research Institute of Mathematical Science (RIMS), Jiangsu Normal University, Xuzhou, 221116, China}
\email{xbsun@jsnu.edu.cn}

\author{Huilian Xia}
\curraddr[Xia, H.]{ School of Mathematics and Statistics and Research Institute of Mathematical Science (RIMS), Jiangsu Normal University, Xuzhou, 221116, China}
\email{huilxia@jsnu.edu.cn}

\author{Yingchao Xie}
\curraddr[Xie, Y.]{ School of Mathematics and Statistics and Research Institute of Mathematical Science (RIMS), Jiangsu Normal University, Xuzhou, 221116, China}
\email{ycxie@jsnu.edu.cn}

\author{Xingcheng Zhou}
\curraddr[Zhou, X.]{ School of Mathematics and Statistics and Research Institute of Mathematical Science (RIMS), Jiangsu Normal University, Xuzhou, 221116, China}
\email{xczhou@jsnu.edu.cn}

\begin{abstract}
In this paper, the strong averaging principle is researched for a class of H\"{o}lder continuous drift slow-fast SPDEs with $\alpha$-stable process by the Zvonkin's transformation and the classical Khasminkii's time discretization method. As applications, an example is also provided to explain our result.
\end{abstract}
\date{}
\subjclass[2000]{60H15; 35Q30; 70K70}
\keywords{Stochastic partial differential equation; Averaging principle; Zvonkin's transformation; H\"{o}lder continuous; $\alpha$-stable process}

\maketitle
\section{Introduction}

The slow-fast system (also called two-time scales system) can be described through a coupled system, which corresponds to the "slow" and "fast" components.
This kind of system has appeared in many fields, such as the nonlinear oscillations, chemical kinetics, biology, climate dynamics, see e.g. \cite{BR, EE, HKW, Ku}.
The averaging principle for slow-fast system shows the asymptotic behavior of the slow component when the scale $\vare$ goes to $0$. The pioneering works are attributed to the Bogoliubov and Mitropolsky \cite{BM} for the deterministic systems and Khasminskii \cite{K1} for stochastic differential equations (SDEs for short). Recently, Cerrai and
Freidlin \cite{C1,CF} first study the averaging principle for stochastic partial differential equations ( SPDEs for short), i.e., considering a class of two-time scale stochastic partial differential equations on a bounded domain $D\subset \RR^{d}$ ($d>1$) with Dirichlet boundary condition,
\begin{eqnarray*}\left\{\begin{array}{l}
\displaystyle
d u_{\vare}(t) = \left[A_1 u_{\vare}(t)+B_1(u_{\vare}(t),v_{\vare}(t))\right]dt+G_1(u_{\vare}(t),v_{\vare}(t)) dw^{Q_1}_t,\quad u_{\vare}(0)=x\in L^{2}(D), \nonumber\\
\displaystyle d v_{\vare}(t) =\frac{1}{\vare}\left[A_2 v_{\vare}(t)+B_2(u_{\vare}(t),v_{\vare}(t))\right]dt+ \frac{1}{\sqrt{\vare}}G_2(u_{\vare}(t),v_{\vare}(t))d w^{Q_2}_t,\quad v_{\vare}(0)=y\in L^{2}(D),
\end{array}\right.\nonumber
\end{eqnarray*}
where $w^{Q_1}_t$ and $w^{Q_2}_t$ are Gaussian noises which are white in time and colored in space, in the case of space dimension $d>1$, with covariances operators $Q_1$ and $Q_2$. The operators $A_1$ and $A_2$ are second order uniformly elliptic operators. The coefficients $B_1,B_2:L^{2}(D) \times L^{2}(D) \rightarrow L^{2}(D)$, $G_1,G_2:L^{2}(D) \times L^{2}(D) \rightarrow \mathcal{L}(L^{2}(D);L^1(D))\cap\mathcal{L}(L^{\infty}(D); L^2(D))$ are Lipschitz continuous. Moreover, there exist two Lipschitz-continuous mappings $\bar{B_1}: L^2(D)\rightarrow L^2(D)$ and $\bar{G}: L^2(D)\rightarrow \mathcal{L}(L^{\infty}(D); L^2(D))$ such that for any $T>0$, $t\geq 0$ and $x,y\in L^2(D)$,
\begin{eqnarray*}
&&\EE\left|\frac{1}{T}\int^{t+T}_{t} \langle B_1(x, v^{x,y}(s)), h\rangle_{L^{2}(D)}ds-\langle\bar{B_1}(x),h\rangle_{L^{2}(D)}\right|\\
&\leq &\alpha(T)(1+|x|_{L^2(D)}+|y|_{L^2(D)})|h|_{L^2(D)}
\end{eqnarray*}
holds for any $ h\in L^2(D)$ and for any $h,k\in L^{\infty}(D)$,
\begin{eqnarray*}
&&\left|\frac{1}{T}\int^{t+T}_{t} \EE \langle G_1(x, v^{x,y}(s))h, G_1(x, v^{x,y}(s))k\rangle_{L^2(D)}ds-\langle \bar{G}_1(x)h, \bar{G}_1(x)k\rangle_{L^2(D)}\right|\\
&\leq& \alpha(T)(1+|x|^2+|y|^2)|h|_{L^{\infty}(D)}|k|_{L^{\infty}(D)}
\end{eqnarray*}
for some function $\alpha(T)$ vanishing as $T$ goes to infinity, where $v^{x,y}_t$ is the unique solution to the corresponding frozen equation,
$$
d v^{x,y}(t) =A_1v^{x,y}(t) dt+B_1(x, v^{x,y}_{t})dt+G_1(x, v^{x,y}_{t})d w^{Q_2}_t,\quad v^{x,y}_0=y.
$$
Then Cerrai \cite{C1} proves that the slow component $X^{\vare}$ converges $\bar{X}$ in distribution, i.e.,
$$
\mathcal{L}(u^{\vare})\rightarrow \mathcal{L}(\bar{u}) \quad \text{in} \quad C([0,T];L^2(D)),
$$
where $\bar{u}_t$ is the unique solution of the corresponding averaged equation,
$$
d \bar{u}(t) =A_1\bar{u}(t) dt+\bar{B}_1(\bar{u}(t))dt+\bar{G}_1(\bar{u}(t))d w^{Q_1}_t.
$$
For more results about the averaging principle for stochastic systems, please see e.g. \cite{Gi, L, LRSX1, RSX2, PXW2020, XPW} for SDEs  and see e.g. \cite{B1,C1,DSXZ,FLL,GP2,PXW2017,PXY,WRD12} for SPDEs.
\vspace{0.1cm}

However, the coupled drift coefficients $B_1$ and $B_2$ always satisfy at least local Lipshitz continuous, whereas the stochastic systems with irregular coefficients have not been studied much. As far as we know, in the case of SDEs, Veretennikov \cite{V0} studies the averaging principle for SDEs under the assumptions
that the drift coefficient of slow equation is only bounded and measurable with respect to
slow variable, and all the other coefficients are global Lipschitz continuous. R\"{o}ckner et al. \cite{RSX} study the strong and weak convergence in the averaging principle for SDEs
with H\"{o}lder coefficients drift. Also see e.g. \cite{PV1,PV2} for the study of diffusion approximations for SDEs with singular coefficients. In the case of SPDEs, Sun et al. \cite{SXX1} study the averaging principle for SPDEs with bounded H\"{o}lder continuous coefficients driven by Wiener noise. R\"{o}ckner et al. \cite{RXY} study the averaging principle for SPDEs when the coefficients are H\"{o}lder continuous with respect to the second variable.

\vspace{0.1cm}
In the past ten years, the SPDEs driven by cylindrical $\alpha$-stable noises have attracted much attention by many scholars, see e.g. \cite{DXZ,DXZ1,PZ,Xu}.
Considering the cylindrical $\alpha$-stable noises has theoretically meaningful, for instance such processes can be used to model systems with heavy tails in physics.
Recently, we have proved the well-posedness of a class of SPDEs driven by $\alpha$-stable process with bounded H\"{o}lder continuous in \cite{SXX0}. Thus, the main purpose of this paper is to further study the averaging principle for such kind of SPDEs, i.e, considering the following stochastic system in a Hilbert space $H$:
\begin{equation}\left\{\begin{array}{l}\label{Main equation}
\displaystyle
dX^{\varepsilon}_t=\left[A X^{\varepsilon}_t
+B(X^{\varepsilon}_t, Y^{\varepsilon}_t)\right]dt+d L_{t},\vspace{2mm}\\
\displaystyle dY^{\varepsilon}_t=\frac{1}{\varepsilon}\left[AY^{\varepsilon}_t+F(X^{\varepsilon}_t, Y^{\varepsilon}_t)\right]dt
+\frac{1}{\varepsilon^{1/\alpha}}d Z_{t},\\
\end{array}\right.
\end{equation}
where $\varepsilon>0$ is a small parameter describing the ratio of the time scales of the slow component $X^{\varepsilon}_t$
and the fast component $Y^{\varepsilon}_t$, $A: \mathscr{D}(A)\to H$ is the infinitesimal generator of a linear
strongly continuous semigroup $\{e^{tA}\}_{t\geq 0}$, $B, F: H\times H\rightarrow H$, which are bounded and H\"{o}lder continuous functions,
$\{L_t\}_{t\geq 0}$ and $\{Z_t\}_{t\geq 0}$ are $H$-valued mutually independent cylindrical $\alpha$-stable processes with $\alpha\in(1,2)$ defined on a complete filtered probability space
$(\Omega,\mathscr{F},\{\mathscr{F}_{t}\}_{t\geq0},\mathbb{P})$. Under some proper conditions, we will show that for any $T>0$ and $p\geq 2$,
$$\lim_{\vare\rightarrow 0}\mathbb{E} \left(\sup_{t\in[0,T]}|X_{t}^{\vare}-\bar{X}_{t}|^{p} \right)=0,$$
where $\bar{X}_t$ is the solution to the corresponding averaged equation (see \eref{3.1}  below).

\vspace{0.1cm}
There are some existing results about the averaging principle for slow-fast SPDEs driven by $\alpha$-stable process.
For instance, Bao et al. \cite{BYY} prove the strong averaging principle for two-time scale SPDEs driven by $\alpha$-stable noise.
The first author of the paper and his collaborators prove the strong averaging principle for stochastic Ginzburg-Landau equation and stochastic Burgers equations
driven by $\alpha$-stable processes in \cite{SZ} and \cite{CSS} respectively. However, the above-mentioned results require that the coupled coefficients $B$ and $F$ satisfy
the global Lipschitz continuous. Hence, many techniques used there do not work in the case that the coupled coefficients $B$ and $F$ are only H\"{o}lder continuous.

\vspace{0.1cm}
As known to all, a powerful technique of changing the singular coefficients to regular
ones is the well-known Zvonkin's transformation, which is widely used to study the strong well-posedness for S(P)DEs with singular coefficients (see e.g. \cite{DF, DFPR,KR} ).
Thus we will use the Zvonkin's transformation and combine the classical Khasminskii's time discretization to prove our main result.
In comparison to the Wiener noise considered in existing works \cite{V0,RXY,SXX1}, the $\alpha$-stable noise considered here will cause some difficulties,
such as the solution $(X_{t}^{\vare}, Y_{t}^{\vare})$ has only $p$-th moment for $p\in (0,\alpha)$, hence some methods developed there do not work in this situation.
\vspace{0.1cm}

The rest of the paper is organized as follows. In Section 2, we give some notations and the assumptions. In section 3, we give some a-priori estimates of the solution $(X^{\varepsilon},Y^{\varepsilon})$
and study the frozen and averaged equations.  In section 4, the Zvonkin's transformation and classical Khasminskii's time discretization method are presented first, then we state our main result and give its detailed proofs. In Section 5, we will give an example to illustrate the applicability of our result.

Throughout this paper, $C$, $C_p$, $C_T$ and $C_{p,T}$ are positive constants which may change from line to line, where the subscript $p, T$ are used to emphasize that the constant only depends
on the parameters $p, T$.

\vspace{0.1cm}

\section{Notations and assumptions} \label{Sec Main Result}

Let $H$ be a Hilbert space, whose inner product and norm are denoted by $\langle\cdot,\cdot\rangle$ and $|\cdot|$ respectively.

For a given $\beta\in (0,1]$, $C^{\beta}_b(H,H)$ denote the space of all bounded and H\"{o}lder continuous functions $G(x): H\rightarrow H$ with index $\beta$, and its norm is defined
$$
\|G\|_{C^{\beta}_b}:=\|G(x)\|_{\infty}+\|G\|_{C^{\beta}}<\infty,
$$
where $\|G(x)\|_{\infty}:=\sup_{x\in H}|G(x)|$ and $\|G\|_{C^{\beta}}:=\sup_{x\neq y\in H}\frac{|G(x)-G(y)|}{|x-y|^{\beta}}$. Similarly, given $\beta\in (1,2]$, the space $C^{\beta}_b(H,H)$
represents the function space satisfying
$$\|G\|_{C^{\beta}_b}:=\|G\|_{C^1_b}+\sup_{x\neq y\in H}\frac{\|DG(x)-DG(y)\|}{|x-y|^{\beta-1}}<\infty,$$
where $\|\cdot\|$ denotes the operator norm.

The cylindrical $\alpha$-stable process $\{L_t\}_{t\geq 0}$ and $\{Z_t\}_{t\geq 0}$ in (\ref{Main equation}) are denoted by
\begin{eqnarray*}
L_t:=\sum_{n\geq1}\beta_nL^n_te_n\quad \text{and}\quad Z_t:=\sum_{n\geq1}\gamma_nZ^n_te_n,\quad t\geq0,
\end{eqnarray*}
where $\{\beta_n\}_{n\geq1}$ and $\{\gamma_n\}_{n\geq1}$ are two given sequences of positive numbers, $\{e_n\}_{n\geq1}$ is a complete orthonormal basis of $H$. $\{L^n_t\}_{n\geq1}$ and $\{Z^n_t\}_{n\geq1}$ are two sequences of independent one dimensional rotationally symmetric $\alpha$-stable processes with $\alpha\in(1,2)$, i.e., the L\'{e}vy measure of $L^n_t$ and $Z^n_t$ is given by
\begin{eqnarray*}
\nu(dz)=\frac{c_{\alpha}}{|z|^{1+\alpha}}dz,
\end{eqnarray*}
where $c_{\alpha}>0$ is a constant.  We also assume that $\{L^n_t\}_{n\geq1}$ and $\{Z^n_t\}_{n\geq1}$ are independent.

By L\'{e}vy-It\^{o}'s decomposition, we can get
$$L^n_t=\int_{|z|\leq c}z\tilde{N}^{1,n}(t,dz)+\int_{|z|>c}z{N}^{1,n}(t,dz),$$
$$Z^n_t=\int_{|z|\leq c}z\tilde{N}^{2,n}(t,dz)+\int_{|z|>c}z{N}^{2,n}(t,dz),$$
where $c$ can be chosen for any positive constant. For any $n\in \NN_+$, denote the Poisson random measures as follows:
$${N}^{1,n}([0,t],\Gamma):=\sum_{0\leq s\leq t}1_{\Gamma}(L^n_s-L^n_{s^-}), \quad t>0, \Gamma\in \mathscr{B}(\RR\setminus\{0\}),$$
$${N}^{2,n}([0,t],\Gamma):=\sum_{0\leq s\leq t}1_{\Gamma}(Z^n_s-Z^n_{s^-}), \quad t>0, \Gamma\in \mathscr{B}(\RR\setminus\{0\}).$$
Then its corresponding compensated Poisson random measures are given by
\begin{eqnarray*}
\tilde{N}^{i,n}([0,t],\Gamma):= {N}^{i,n}([0,t],\Gamma)-t\nu(\Gamma),\quad i=1,2.
\end{eqnarray*}

\vspace{0.3cm}
We give the following assumptions throughout this paper:
\begin{assumptionA}\label{A1}
$A$ is a selfadjoint operator, $Ae_n=-\lambda_n e_n$ with $\lambda_n>0$ and $\lambda_n\uparrow \infty$, as $n\uparrow \infty$.
Where $\{e_n\}_{n\geq1}\subset \mathscr{D}(A)$ is a complete orthonormal basis of $H$.
\end{assumptionA}

\begin{assumptionA}\label{A2} There exists $\eta\in(0,1)$ such that
$$\sum_{n\geq1}\beta^{\alpha}_n<\infty,\quad \sum_{n\geq1}\gamma^{\alpha}_n<\infty,\quad \sum_{n\geq1}\lambda^{-1}_n<\infty.$$
\end{assumptionA}

\begin{assumptionA}\label{A3}
There exists a constant $\gamma \in (1,\alpha] $ such that for $\gamma^{'}<\gamma$ and $\lambda>0$, we have
$$\int^{\infty}_0 e^{-\lambda t}\Lambda_t^{\gamma^{'}}dt<\infty,$$
where $\Lambda_t:=\max\{\Lambda_1(t),\Lambda_2(t)\}$ with
$$\Lambda_1(t) := \sup\limits_{n\geq1}\frac{e^{-\lambda_nt}\lambda^{1/\alpha}_n}{\beta_n},\quad \Lambda_2(t) := \sup\limits_{n\geq1}\frac{e^{-\lambda_nt}\lambda^{1/\alpha}_n}{\gamma_n}.$$
Moreover, there exists $\kappa_1\in(0,1/2)$ such that for any $\lambda>0$,
\begin{eqnarray}
\int^{\infty}_0 e^{-\lambda t}\Lambda_{3,\kappa_1}(t)dt<\infty,\label{A31}
\end{eqnarray}
where $\Lambda_{3,\kappa_1}(t) := \sup\limits_{n\geq1}\frac{e^{-\lambda_nt}\lambda^{\kappa_1+1/\alpha}_n}{\beta_n}$.
\end{assumptionA}

\begin{assumptionA}\label{A4}
$B, F: H\times H\rightarrow H$ are bounded and measurable functions. In addition, there exist constants $\eta_1,\eta_2,\eta_3\in (0,1)$ satisfying $\eta_1\wedge(\eta_2\eta_3)\in (1+\alpha/2-\gamma,1)$ such that for any $x_1,x_2,y_1,y_2\in H$,
$$|B(x_1, y_1)-B(x_2, y_2)|\leq C\left(|x_1-x_2|^{\eta_1} + |y_1-y_2|^{\eta_2}\right),$$
$$|F(x_1, y_1)-F(x_2, y_2)|\leq C|x_1-x_2|^{\eta_3}+ L_F|y_1-y_2|,$$
where $L_F$ is a constant which satisfies $\lambda_{1}-L_{F}>0.$
\end{assumptionA}

\begin{remark}
It is worthy to point out that $F(x,y)$ is Lipschitz continuous with respect to $y$ uniformly for $x$ in \ref{A4}, which seems a little strong, however this condition is used to prove the H\"{o}lder continuous of the averaged coefficients $\bar{B}$. In other words, if $F(x,y)$ is assumed H\"{o}lder continuous with respect to $y$ by index $\gamma\in (0,1)$, it is not easy to prove that $\bar{B}$ is still H\"{o}lder continuous, indeed it is expected that the H\"{o}lder index of $\bar{B}$ should be $\eta_1\wedge \eta_3$ and maybe the technique of Poisson equation is helpful to prove it, see \cite{RXY}.
\end{remark}

\begin{remark}\label{remark 2.2}
Refer to \cite[(4.12)]{PZ}, if $\sum_{n\geq1}\frac{\beta^{\alpha}_n}{\lambda^{1-\alpha\theta/2}_n}<\infty$ for some $\theta>0$, then for any $0<p<\alpha$, we have
\begin{eqnarray}\label{Ls}
\sup_{t\geq 0}\mathbb{E}\Big\|\int^t_0 e^{(t-s)A}dL_s\Big\|^p_{\theta}\Big]\leq C_{p}\Big(\sum_{n\geq1}\frac{\beta^{\alpha}_n}{\lambda^{1-\alpha\theta/2}_n}\Big)^{p/\alpha},
\end{eqnarray}
where $\|\cdot\|_{\theta}$ is the norm in space $H^{\theta}$, whose detailed definition is given below. Similar, if $\sum_{n\geq1}\frac{\gamma^{\alpha}_n}{\lambda_n}<\infty$, then for any $0<p<\alpha$, we have
\begin{eqnarray}\label{Zs}
\sup_{t\geq 0}\mathbb{E}\Big|\int_0^te^{(t-s)A}dZ_s\Big|^p\leq C_{p}\Big(\sum_{n\geq1}\frac{\gamma^{\alpha}_n}{\lambda_n}\Big)^{p/\alpha}.
\end{eqnarray}
\end{remark}

\vspace{0.3cm}

For any $s\geq 0$, we define
 $$H^s:=\mathscr{D}((-A)^{s/2}):=\left\{u=\sum^{\infty}_{n=1}u_n e_n: u_n=\langle u,e_n\rangle\in \mathbb{R},~\sum^{\infty}_{n=1}\lambda_n^{s}u_n^2<\infty\right\}.$$
Putting
 $$(-A)^{s/2}u:=\sum^{\infty}_{n=1}\lambda_n^{s/2} u_n e_n,\quad u\in\mathscr{D}((-A)^{s/2}),$$
with the associated norm
\begin{eqnarray*}
\|u\|^2_{s}:=|(-A)^{s/2}u|^2=\sum^{\infty}_{n=1}\lambda_n^{s} u^2_n.
\end{eqnarray*}
It is not difficult to find that $H^{0}=H$. It's easy to check that for any $\theta>0$, there exists a constant $C_{\theta}>0$ such that
\begin{eqnarray}
|e^{tA}x|\leq e^{-\lambda_1 t}|x|, \quad x\in H, t\geq 0;\label{SP1}\end{eqnarray}
\begin{eqnarray}\|e^{tA}x\|_\theta\leq C_{\theta} t^{-\theta/2}|x|, \quad x\in H, t>0;\label{SP2}\end{eqnarray}
\begin{eqnarray}| e^{At}x-x | \leq C_{\theta}t^{\theta/2}\|x\|_{\theta},\quad x\in \mathscr{D}((-A)^{\theta/2}), t\geq 0.\label{SP3}\end{eqnarray}

\section{A-priori estimates, frozen and averaged equations} \label{S3}

This section is divided into two parts. We study the well-posedness of system \eref{Main equation} and give some a-priori estimates of the solution $(X^{\varepsilon},Y^{\varepsilon})$ in subsection 3.1. Then the frozen equation and the averaged equation are studied in subsection 3.2. Note that we always assume \ref{A1}-\ref{A4} hold and fix the initial values $(x,y)\in H\times H$ throughout this paper.
\vspace{0.3cm}

\subsection {Well-posedness and a-priori estimates of \texorpdfstring{$(X^{\varepsilon}_t, Y^{\varepsilon}_t)$}{Lg}}

\begin{lemma} \label{PMY}
The system \eqref{Main equation} has a unique mild solution $(X^{\varepsilon},Y^{\varepsilon})$, i.e., $\PP$-a.s.,
\begin{eqnarray*}
\left\{\begin{array}{l}
\displaystyle
X^{\varepsilon}_t=e^{tA}x+\int^t_0e^{(t-s)A}B(X^{\varepsilon}_s, Y^{\varepsilon}_s)ds+\int^t_0 e^{(t-s)A}dL_s,\vspace{2mm}\\
\displaystyle Y^{\varepsilon}_t=e^{tA/\varepsilon}y+\frac{1}{\varepsilon}\int^t_0e^{(t-s)A/\varepsilon}F(X^{\varepsilon}_s, Y^{\varepsilon}_s)ds+\frac{1}{\varepsilon^{1/\alpha}}\int^t_0 e^{(t-s)A/\varepsilon}dZ_s.
\end{array}\right.
\end{eqnarray*}
Moreover, for any $T>0$ and $1\leq p<\alpha$, there exists a constant $C_{p,T}>0$ such that
\begin{align}
\sup_{\vare\in(0,1), t\in [0,T]}\mathbb{E}|X_{t}^{\vare}|^{p}\leq C_{p,T}\left(1+|x|^{p}\right),\label{F3.1}
\end{align}
\begin{align}
\sup_{\vare\in(0,1), t\in [0,T]}\mathbb{E}|Y_{t}^{\varepsilon}|^{p}\leq C_{p}\left(1+|y|^{p}\right).\label{ess}
\end{align}
\end{lemma}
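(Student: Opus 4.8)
The plan is to establish existence and uniqueness of the mild solution by a standard fixed-point argument on a suitable space of adapted processes, and then to derive the moment bounds \eref{F3.1} and \eref{ess} by decomposing the solution into a stochastic convolution part and a deterministic-convolution part controlled via Gronwall. First I would solve the fast equation with the slow component frozen, or rather treat the coupled system jointly: for fixed $\vare$, set $\Gamma^{1}_t:=\int_0^t e^{(t-s)A}dL_s$ and $\Gamma^{2,\vare}_t:=\vare^{-1/\alpha}\int_0^t e^{(t-s)A/\vare}dZ_s$, which by Assumption \ref{A2} and Remark \ref{remark 2.2} are well-defined $H$-valued c\`adl\`ag processes with $\sup_{t\ge 0}\EE|\Gamma^{1}_t|^p<\infty$ and (after the change of variables $s\mapsto s/\vare$ inside the convolution, which leaves the bound $\vare$-independent) $\sup_{\vare\in(0,1),t\ge 0}\EE|\Gamma^{2,\vare}_t|^p<\infty$ for any $p\in[1,\alpha)$. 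Subtracting these, the remainders $U^\vare_t:=X^\vare_t-\Gamma^1_t$ and $V^\vare_t:=Y^\vare_t-e^{tA/\vare}y-\Gamma^{2,\vare}_t$ solve random integral equations with the bounded, Lipschitz-in-a-weak-sense (actually H\"older, but bounded) drifts $B$ and $F$; since $B,F$ are bounded, a Picard iteration in $C([0,T];H)$-valued $L^p(\Omega)$ converges, giving the unique mild solution. Pathwise uniqueness for the full system then follows from the H\"older (in particular, at most linear growth) bounds on $B$ and the Lipschitz-in-$y$ bound on $F$ together with $\lambda_1-L_F>0$.

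For the slow-component estimate \eref{F3.1}, I would use the mild formulation directly: $|X^\vare_t|\le |e^{tA}x| + \int_0^t |e^{(t-s)A}B(X^\vare_s,Y^\vare_s)|\,ds + |\Gamma^1_t| \le |x| + \|B\|_\infty\int_0^t e^{-\lambda_1(t-s)}\,ds + |\Gamma^1_t| \le |x| + \|B\|_\infty/\lambda_1 + |\Gamma^1_t|$, using \eref{SP1} and boundedness of $B$. Taking $p$-th moments and invoking the uniform bound on $\EE|\Gamma^1_t|^p$ from Remark \ref{remark 2.2} (whose hypothesis $\sum_n \beta_n^\alpha/\lambda_n^{1-\alpha\theta/2}<\infty$ holds with $\theta=0$ by Assumption \ref{A2}) yields \eref{F3.1} with a constant depending only on $p,T$. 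Here the boundedness of $B$ is doing all the work, so there is essentially no Gronwall needed for the slow part.

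For the fast-component estimate \eref{ess}, the key point is the contractive drift: from the mild form $Y^\vare_t=e^{tA/\vare}y+\frac1\vare\int_0^t e^{(t-s)A/\vare}F(X^\vare_s,Y^\vare_s)\,ds+\Gamma^{2,\vare}_t$, and using $\|e^{tA/\vare}\|\le e^{-\lambda_1 t/\vare}$ together with $|F(x,y)|\le |F(x,0)|+L_F|y|\le \|F(\cdot,0)\|_\infty + L_F|y|$ (a consequence of \ref{A4}, or simply $\|F\|_\infty$), one gets $|Y^\vare_t|\le e^{-\lambda_1 t/\vare}|y| + \frac1\vare\int_0^t e^{-\lambda_1(t-s)/\vare}(\|F\|_\infty + L_F|Y^\vare_s|)\,ds + |\Gamma^{2,\vare}_t|$. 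If one uses only boundedness of $F$, the middle term is bounded by $\|F\|_\infty/\lambda_1$ directly; so in fact $|Y^\vare_t|\le |y| + \|F\|_\infty/\lambda_1 + |\Gamma^{2,\vare}_t|$, and taking $p$-th moments with the $\vare$-uniform bound on $\EE|\Gamma^{2,\vare}_t|^p$ gives \eref{ess} with a constant $C_p$ independent of $T$ and $x$, exactly as claimed. I would present the argument this way rather than through the singular Gronwall inequality with kernel $\vare^{-1}e^{-\lambda_1(t-s)/\vare}$, since boundedness of $F$ bypasses that subtlety entirely — though I would note that the same conclusion follows from Gronwall using $\int_0^t \vare^{-1}e^{-\lambda_1(t-s)/\vare}\,ds\le 1/\lambda_1<1$ if one prefers to track the $L_F$ dependence. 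The main obstacle, and the only genuinely nontrivial ingredient, is the moment bound on the $\alpha$-stable stochastic convolutions $\Gamma^1$ and $\Gamma^{2,\vare}$ uniformly in $\vare$; this is supplied by Remark \ref{remark 2.2} after a scaling change of variables showing $\vare^{-1/\alpha}\int_0^t e^{(t-s)A/\vare}dZ_s$ is equal in law to $\int_0^{t/\vare}e^{(t/\vare-s)A}d\tilde Z_s$ for a suitably rescaled $\alpha$-stable process $\tilde Z$ with the same Lévy measure structure, whence the bound \eref{Zs} applies with a constant free of $\vare$.
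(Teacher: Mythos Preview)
Your treatment of the moment bounds \eref{F3.1} and \eref{ess} is correct and matches the paper's proof almost verbatim: decompose the mild solution, use $\|e^{tA}\|\le e^{-\lambda_1 t}$, boundedness of $B$ and $F$, and the $\vare$-uniform bound on the fast stochastic convolution via the scaling $\tilde Z_t=\vare^{-1/\alpha}Z_{t\vare}$. That part is fine.

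The gap is in your existence and uniqueness argument. A Picard iteration in $L^p(\Omega;C([0,T];H))$ does \emph{not} converge under mere boundedness of the drift; you need a contraction, which boundedness alone does not provide. Nor does pathwise uniqueness ``follow from the H\"older bounds on $B$'' by any elementary argument: with $B$ only H\"older in both variables (and $F$ only H\"older in the first variable), two mild solutions driven by the same noise need not coincide without an additional regularising mechanism coming from the noise. This is precisely the singular-drift regime, and the well-posedness question here is nontrivial.

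The paper handles this by recasting the coupled system as a single equation on the product space $\mathcal H=H\times H$ with drift $B^\vare(v)=(B(x,y),\vare^{-1}F(x,y))$, which is bounded and H\"older of index $\eta_1\wedge\eta_2\wedge\eta_3$, and then invoking \cite[Theorem~2.2]{SXX0}. That cited result establishes pathwise uniqueness for SPDEs with bounded H\"older drift and cylindrical $\alpha$-stable noise via a Zvonkin-type transformation, using the smoothing effect of the stable convolution semigroup (this is where Assumption~\ref{A3} enters). So the missing idea in your proposal is that well-posedness genuinely requires the noise-induced regularisation encoded in \cite{SXX0}, not a direct fixed-point argument.
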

\begin{proof}
Put $\mathcal{H}:=H\times H$ and rewrite the system \eref{Main equation} for $V^{\varepsilon}_t=(X^{\varepsilon}_t,Y^{\varepsilon}_t)$ as
\begin{eqnarray*}
dV^{\varepsilon}_t=\tilde{A}V^{\varepsilon}_tdt+B^{\vare}(V^{\varepsilon}_t)dt+d G^{\varepsilon}_t,\quad V^{\varepsilon}_0=(x,y)\in \mathcal{H},
\end{eqnarray*}
where $G^{\varepsilon}_t:=(L_t,\frac{1}{\varepsilon^{1/\alpha}}Z_t)$  is a $\mathcal{H}$-valued cylindrical $\alpha$-stable process, and
\begin{eqnarray*}
\tilde{A}V^{\varepsilon}_t&&\!\!\!\!\!\!\!\!=\left(AX^{\varepsilon}_t,\frac{1}{\varepsilon}AY^{\varepsilon}_t\right),
\\B^{\vare}(V^{\varepsilon}_t)&&\!\!\!\!\!\!\!\!=\left(B(X^{\varepsilon}_t,Y^{\varepsilon}_t),\frac{1}{\varepsilon}F(X^{\varepsilon}_t,Y^{\varepsilon}_t)\right).
\end{eqnarray*}

It is easy to check that $B^{\vare}$ is bounded and H\"{o}lder continuous with index $\eta_1\wedge\eta_2\wedge\eta_3$ in $\mathcal{H}$, i.e.,
$$
|B^{\vare}(v_1)-B^{\vare}(v_2)|_{\mathcal{H}}\leq C_{\vare}|v_1-v_2|^{\eta_1\wedge\eta_2\wedge\eta_3}_{\mathcal{H}},\quad v_1,v_2\in \mathcal{H},
$$
where $|v|_{\mathcal{H}}:=\left(|x|^2_{H}+|y|^2_{H}\right)^{1/2}$ with $v=(x,y)\in \mathcal{H}$. Then under the assumptions \ref{A1}-\ref{A4}, the existence and uniqueness of mild solution for system \eqref{Main equation} follows by \cite[Theorem 2.2]{SXX0} directly.

Next, we intend to prove some a-priori estimates of the solution. Recall that
\begin{eqnarray*}
X^{\varepsilon}_t=e^{tA}x+\int^t_0e^{(t-s)A}B(X^{\varepsilon}_s, Y^{\varepsilon}_s)ds+\int^t_0 e^{(t-s)A}dL_s.
\end{eqnarray*}
By the boundedness of $B$, $\eref{Ls}$ and $\eref{SP1}$, we easily have
\begin{eqnarray*}
\sup_{t\in [0,T]}\mathbb{E}|X^{\varepsilon}_t|^p\leq \!\!\!\!\!\!\!\!&&C_p\sup_{t\in[0,T]}\mathbb{E}\left|e^{tA}x\right|^p+C_p\sup_{t\in[0,T]}\mathbb{E}\left|\int^t_0e^{(t-s)A}d L_s\right|^p\nonumber\\
&&+C_p\sup_{t\in[0, T]}\mathbb{E}\left|\int^t_0e^{(t-s)A}B(X^{\varepsilon}_s,Y^{\varepsilon}_s)ds\right|^p\nonumber\\
\leq\!\!\!\!\!\!\!\!&&C_{p,T}(1+\left|x\right|^p).
\end{eqnarray*}

Recall that
\begin{eqnarray*}
Y^{\varepsilon}_t=e^{tA/\varepsilon}y+\frac{1}{\varepsilon}\int^t_0e^{(t-s)A/\varepsilon}F(X^{\varepsilon}_s, Y^{\varepsilon}_s)ds+\frac{1}{\varepsilon^{1/\alpha}}\int^t_0 e^{(t-s)A/\varepsilon}dZ_s.
\end{eqnarray*}
By \eref{Zs}, we obtain for any $t\geq 0$,
\begin{eqnarray}
\EE\left|\frac{1}{\vare^{1/\alpha}}\int^t_0 e^{(t-s)A/\varepsilon}dZ_s\right|=\EE\left|\int^{t/\vare}_0 e^{(t/\vare-s)A}d\tilde Z_s\right|\leq C\left(\sum_{n\geq 1}\frac{\gamma^{\alpha}_n }{\alpha \lambda_n} \right)^{1/\alpha}.\label{F3.3}
\end{eqnarray}
where $\tilde Z_t:=\frac{1}{\vare^{1/ \alpha}}Z_{t\vare}$, which is also a cylindrical $\alpha$-stable process. Thus by the boundedness of $F$, \eref{SP1} and \eref{F3.3}, we have for any $t\geq 0$
\begin{eqnarray*}
\EE|Y_{t}^{\varepsilon}|^{p}\leq\!\!\!\!\!\!\!\!&&C_p\left[e^{-t\lambda_1 p/\vare }|y|^{p}+\left(\int^t_0 \frac{1}{\vare}e^{-s \lambda_1/\vare}|F(X^{\varepsilon}_s, Y^{\varepsilon}_s)|ds\right)^{p}+\EE\left|\frac{1}{{\vare}^{1/{\alpha}}}\int^t_0 e^{(t-s)A/\vare}d Z_s\right|^{p}\right]\\
\leq\!\!\!\!\!\!\!\!&&C_{p}(1+|y|^{p}).
\end{eqnarray*}
The proof is complete.
\end{proof}

\begin{lemma} \label{SOX}
For any $t\in (0, T]$, $1\leq p<\alpha$, and $\theta\in (0,2/\alpha)$. Then exists a constant $C_{p,T}>0$ such that
\begin{align*}
\sup_{\varepsilon \in (0,1)} \mathbb{E}\|X_{t}^{\varepsilon}\|_{\theta}^{p}
\leq C_{p,T}(t^{-\frac{\theta p}{2}}|x|^{p}+1).
\end{align*}
\end{lemma}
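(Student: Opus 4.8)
The plan is to estimate the $H^{\theta}$-norm directly from the mild formula for $X^{\varepsilon}_t$ provided by Lemma \ref{PMY}, writing it as the sum of the three terms $e^{tA}x$, $\int^t_0 e^{(t-s)A}B(X^{\varepsilon}_s,Y^{\varepsilon}_s)\,ds$ and $\int^t_0 e^{(t-s)A}\,dL_s$, and treating each with, respectively, the smoothing estimate \eref{SP2}, the boundedness of $B$, and the stochastic convolution bound \eref{Ls} from Remark \ref{remark 2.2}. Since the mild formula for the slow component involves $e^{tA}$ (not $e^{tA/\varepsilon}$) and none of these bounds sees $\varepsilon$, the uniformity in $\varepsilon\in(0,1)$ will be automatic.

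For the first term, \eref{SP2} gives $\|e^{tA}x\|_{\theta}\leq C_{\theta}t^{-\theta/2}|x|$, which produces the $t^{-\theta p/2}|x|^{p}$ contribution after raising to the $p$-th power. For the drift term I would apply \eref{SP2} inside the time integral together with $\|B\|_{\infty}<\infty$, obtaining the \emph{pathwise} bound
$$\Big\|\int^t_0 e^{(t-s)A}B(X^{\varepsilon}_s,Y^{\varepsilon}_s)\,ds\Big\|_{\theta}\leq C_{\theta}\|B\|_{\infty}\int^t_0 (t-s)^{-\theta/2}\,ds=\frac{C_{\theta}\|B\|_{\infty}}{1-\theta/2}\,t^{1-\theta/2},$$
the time integral being finite precisely because $\theta<2/\alpha<2$; this contributes only $C_{p,T}$ after taking the $p$-th moment, and in particular requires no a-priori moment bound on $(X^{\varepsilon},Y^{\varepsilon})$.

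For the stochastic convolution I would invoke \eref{Ls}, for which it suffices to verify the summability condition $\sum_{n\geq1}\beta^{\alpha}_n\lambda_n^{-(1-\alpha\theta/2)}<\infty$. Here the hypothesis $\theta<2/\alpha$ enters again: it forces $1-\alpha\theta/2>0$, so that $\lambda_n^{-(1-\alpha\theta/2)}\leq\lambda_1^{-(1-\alpha\theta/2)}$ for all $n$ (recall $\lambda_n\uparrow\infty$ with $\lambda_1>0$), whence the series is dominated by $\lambda_1^{-(1-\alpha\theta/2)}\sum_{n\geq1}\beta^{\alpha}_n<\infty$ by \ref{A2}. Thus $\sup_{t\geq0}\mathbb{E}\|\int^t_0 e^{(t-s)A}\,dL_s\|^p_{\theta}\leq C_p$. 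Combining the three estimates via $|a+b+c|^p\leq 3^{p-1}(|a|^p+|b|^p+|c|^p)$ yields $\mathbb{E}\|X^{\varepsilon}_t\|^p_{\theta}\leq C_{p,T}(t^{-\theta p/2}|x|^{p}+1)$, uniformly in $\varepsilon\in(0,1)$.

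There is no serious obstacle here: the statement is a routine consequence of Lemma \ref{PMY}, \eref{SP2} and Remark \ref{remark 2.2}. The only points demanding a little care are checking that the summability hypothesis underlying \eref{Ls} does hold under \ref{A2} — which is exactly where the restriction $\theta<2/\alpha$ is needed — and observing that the drift contribution is in fact deterministic, so that the only genuinely probabilistic input is the stochastic-convolution estimate.
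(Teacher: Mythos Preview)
Your proposal is correct and follows essentially the same approach as the paper: decompose the mild solution into the three terms, use \eref{SP2} for $e^{tA}x$, the boundedness of $B$ together with \eref{SP2} for the drift integral, and \eref{Ls} for the stochastic convolution (checking the summability hypothesis via $\theta<2/\alpha$ and \ref{A2}). If anything, you spell out more carefully than the paper why $\sum_{n\geq1}\beta_n^{\alpha}\lambda_n^{-(1-\alpha\theta/2)}<\infty$ follows from \ref{A2}.
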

\begin{proof}
By \eref{Ls}, for any $1\leq p<\alpha$ and $\theta\in (0,2/\alpha)$ we have
$$
\sup_{t\in [0,T]}\EE\left\|\int^t_0 e^{(t-s)A}d L_s\right\|^p_{\theta}\leq C_{p}\Big(\sum_{n\geq1}\frac{\beta^{\alpha}_n}{\lambda^{1-\alpha\theta/2}_n}\Big)^{p/\alpha}\leq C_{p}\Big(\sum_{n\geq1}\beta^{\alpha}_n\Big)^{p/\alpha}
$$
which combines with the boundedness of $B$ and \eref{SP2}, it follows
\begin{eqnarray*}
\EE\|X^{\varepsilon}_t\|^p_{\theta}\leq\!\!\!\!\!\!\!\!&&C_p\|e^{tA}x\|^p_{\theta}+C_p\EE\left\|\int^t_0e^{(t-s)A}B(X^{\varepsilon}_s, Y^{\varepsilon}_s)ds\right\|^p_{\theta}+C_p\EE\left\|\int^t_0 e^{(t-s)A}d L_s\right\|^p_{\theta}\\
\leq\!\!\!\!\!\!\!\!&&C_p t^{-\frac{\theta p}{2}}|x|^{p}+C_p\Big[\int^t_0 (t-s)^{-\frac{\theta}{2}}ds\Big]^{p}+C_{p}\\
\leq\!\!\!\!\!\!\!\!&&C_{p,T}(t^{-\frac{\theta p}{2}}|x|^{p}+1).
\end{eqnarray*}
The proof is complete.
\end{proof}

\begin{lemma} \label{COX}
For any $T>0$, $1\leq p<\alpha$ and $\theta\in (0,2/\alpha)$.Then there exists a constant $C_{p,T}>0$ such that for any $\vare\in(0,1)$ and $\delta>0$ small enough,
\begin{align}
\mathbb{E}\left[\int^{T}_0|X^{\varepsilon}_t-X^{\varepsilon}_{t(\delta)}|dt\right]^{p}\leq C_{p,T}(1+|x|^p)\delta^{\frac{p\theta}{2}},\label{F3.7}
\end{align}
where $t(\delta):=[\frac{t}{\delta}]\delta$ with $[s]$ denotes the integer part.
\end{lemma}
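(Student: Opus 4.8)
The plan is to substitute the mild formula of Lemma~\ref{PMY} into the increment and decompose
\begin{align*}
X^{\vare}_t-X^{\vare}_{t(\delta)}
={}&\big(e^{tA}-e^{t(\delta)A}\big)x\\
&+\int_0^t e^{(t-s)A}B(X^{\vare}_s,Y^{\vare}_s)\,ds-\int_0^{t(\delta)}e^{(t(\delta)-s)A}B(X^{\vare}_s,Y^{\vare}_s)\,ds\\
&+\int_0^t e^{(t-s)A}\,dL_s-\int_0^{t(\delta)}e^{(t(\delta)-s)A}\,dL_s\\
=:{}&\mathrm{I}_1(t)+\mathrm{I}_2(t)+\mathrm{I}_3(t).
\end{align*}
Since $p\ge1$, $\EE\big[\int_0^T|X^{\vare}_t-X^{\vare}_{t(\delta)}|\,dt\big]^p\le 3^{p-1}\sum_{i=1}^{3}\EE\big[\int_0^T|\mathrm{I}_i(t)|\,dt\big]^p$, so it suffices to bound each of the three terms by $C_{p,T}(1+|x|^p)\delta^{p\theta/2}$. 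The terms $\mathrm{I}_1,\mathrm{I}_2$ are pathwise deterministic, and only the bound $\|B\|_\infty<\infty$ will be used, so all estimates will be uniform in $\vare\in(0,1)$.

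The two deterministic terms are routine. For $\mathrm{I}_1$: when $t<\delta$ one has $t(\delta)=0$ and $|\mathrm{I}_1(t)|=|(e^{tA}-I)x|\le 2|x|$, contributing at most $2\delta|x|\le 2\delta^{\theta/2}|x|$ to $\int_0^T|\mathrm{I}_1(t)|\,dt$; when $t\ge\delta$ the semigroup property gives $\mathrm{I}_1(t)=\big(e^{(t-t(\delta))A}-I\big)e^{t(\delta)A}x$, so by \eref{SP3} and \eref{SP2}, $|\mathrm{I}_1(t)|\le C_\theta(t-t(\delta))^{\theta/2}\|e^{t(\delta)A}x\|_\theta\le C_\theta\,\delta^{\theta/2}\,t(\delta)^{-\theta/2}|x|$, and since $t(\delta)\ge t/2$ for $t\ge\delta$ and $\theta/2<1$ one gets $\int_\delta^T t(\delta)^{-\theta/2}\,dt\le C_{\theta,T}$; hence $\int_0^T|\mathrm{I}_1(t)|\,dt\le C_{\theta,T}|x|\delta^{\theta/2}$. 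For $\mathrm{I}_2$, write $\mathrm{I}_2(t)=\int_{t(\delta)}^t e^{(t-s)A}B(X^{\vare}_s,Y^{\vare}_s)\,ds+\big(e^{(t-t(\delta))A}-I\big)\int_0^{t(\delta)}e^{(t(\delta)-s)A}B(X^{\vare}_s,Y^{\vare}_s)\,ds$; the first term has norm $\le\|B\|_\infty\delta$, and by \eref{SP3} and \eref{SP2} the second has norm $\le C_\theta\,\delta^{\theta/2}\,\|B\|_\infty\int_0^{t(\delta)}(t(\delta)-s)^{-\theta/2}\,ds\le C_{\theta,T}\delta^{\theta/2}$, so $\int_0^T|\mathrm{I}_2(t)|\,dt\le C_{\theta,T}\delta^{\theta/2}$.

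The real work is in the stochastic convolution $\mathrm{I}_3$. Split $\mathrm{I}_3(t)=J_1(t)+J_2(t)$, where $J_1(t):=\int_{t(\delta)}^t e^{(t-s)A}\,dL_s$ and $J_2(t):=\big(e^{(t-t(\delta))A}-I\big)\int_0^{t(\delta)}e^{(t(\delta)-s)A}\,dL_s$ (again via the semigroup identity). For $J_2$, \eref{SP3} gives $|J_2(t)|\le C_\theta\,\delta^{\theta/2}\big\|\int_0^{t(\delta)}e^{(t(\delta)-s)A}\,dL_s\big\|_\theta$, while \eref{Ls} (applicable because $\theta<2/\alpha$ and $\lambda_n\uparrow\infty$, so $\sum_{n\ge1}\beta_n^\alpha\lambda_n^{-(1-\alpha\theta/2)}<\infty$ by \ref{A2}) gives $\sup_{s\ge0}\EE\big\|\int_0^s e^{(s-r)A}\,dL_r\big\|_\theta^p<\infty$; combined with Jensen's inequality in $t$ this yields $\EE\big[\int_0^T|J_2(t)|\,dt\big]^p\le C_{p,\theta,T}\delta^{p\theta/2}$. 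For $J_1$, it is an $\alpha$-stable stochastic convolution over an interval of length $t-t(\delta)\le\delta$, and the moment estimate underlying \eref{Ls} (cf. \cite{PZ}) gives
\begin{align*}
\EE|J_1(t)|^p
&\le C_p\Big(\sum_{n\ge1}\beta_n^\alpha\int_{t(\delta)}^t e^{-\alpha\lambda_n(t-s)}\,ds\Big)^{p/\alpha}\\
&\le C_p\Big(\sum_{n\ge1}\beta_n^\alpha\Big)^{p/\alpha}(t-t(\delta))^{p/\alpha}\\
&\le C_p\Big(\sum_{n\ge1}\beta_n^\alpha\Big)^{p/\alpha}\delta^{p\theta/2},
\end{align*}
where we used $\int_{t(\delta)}^t e^{-\alpha\lambda_n(t-s)}\,ds\le t-t(\delta)\le\delta$, \ref{A2}, and $\delta\le1$ with $\theta<2/\alpha$ (so $\delta^{p/\alpha}\le\delta^{p\theta/2}$); Jensen's inequality in $t$ then gives $\EE\big[\int_0^T|J_1(t)|\,dt\big]^p\le T^{p-1}\int_0^T\EE|J_1(t)|^p\,dt\le C_{p,\theta,T}\delta^{p\theta/2}$.

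Collecting the three bounds and using $\delta\le1$ yields $\EE\big[\int_0^T|X^{\vare}_t-X^{\vare}_{t(\delta)}|\,dt\big]^p\le C_{p,T}(1+|x|^p)\delta^{p\theta/2}$ uniformly in $\vare\in(0,1)$, which is \eref{F3.7}. The only ingredient going beyond the estimates recorded in the excerpt is the short-interval moment bound for $J_1$: the a priori estimate \eref{Ls} is uniform in time and carries no decay in $\delta$, so one must instead exploit the explicit L\'{e}vy structure of $L$ so that the length of the interval genuinely enters the bound --- this is the main (though by now routine) technical step.
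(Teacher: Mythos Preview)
Your proof is correct and reaches the same conclusion as the paper, but the route is different. The paper inserts the auxiliary point $t-\delta$ and writes
\[
X^{\vare}_t-X^{\vare}_{t(\delta)}=(X^{\vare}_t-X^{\vare}_{t-\delta})+(X^{\vare}_{t-\delta}-X^{\vare}_{t(\delta)}),
\]
then uses $X^{\vare}_t-X^{\vare}_{t-\delta}=(e^{A\delta}-I)X^{\vare}_{t-\delta}+\cdots$ and invokes Lemma~\ref{SOX} to control $\|X^{\vare}_{t-\delta}\|_{\theta}$ in one stroke; the singularity at $t=\delta$ is handled by splitting $\int_0^T=\int_0^\delta+\int_\delta^T$ and using only the $H$-norm bound of Lemma~\ref{PMY} on the first piece. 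You instead expand $X^{\vare}_t-X^{\vare}_{t(\delta)}$ directly via the mild formula into initial-data, drift, and noise pieces and estimate each from scratch, never calling Lemma~\ref{SOX}. Your approach is a little more hands-on but self-contained; the paper's is more modular, since all the smoothing is packaged into the single regularity lemma. The stochastic-convolution estimate is essentially identical in the two proofs: both split off the short-interval integral $\int_{t(\delta)}^t e^{(t-s)A}dL_s$ and use the explicit $\alpha$-stable moment bound from \cite{PZ} to extract the factor $\delta^{\theta/2}$.

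One small wording issue: you call $\mathrm{I}_2$ ``pathwise deterministic'', but it depends on $(X^{\vare}_s,Y^{\vare}_s)$; what you mean (and use) is that the \emph{bound} on $|\mathrm{I}_2(t)|$ is deterministic because only $\|B\|_\infty$ enters. This does not affect the argument.
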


\begin{proof}
For any $T>0$ and $1\leq p<\alpha$, by \eref{F3.1} it follows
\begin{eqnarray}
\mathbb{E}\left[\int^{T}_0|X_{t}^{\varepsilon}-X_{t(\delta)}^{\varepsilon}|dt\right]^{p}\leq\!\!\!\!\!\!\!\!&&C_p\mathbb{E}\left[\int^{\delta}_0|X_{t}^{\varepsilon}-x|dt\right]^{p}+C_p\mathbb{E}\left[\int^{T}_{\delta}|X_{t}^{\varepsilon}-X_{t(\delta)}^{\varepsilon}|dt\right]^{p}\nonumber\\
\leq\!\!\!\!\!\!\!\!&&C_{p,T}\delta^{p}(1+|x|^p)+C_{p}\mathbb{E}\left(\int^{T}_{\delta}|X_{t}^{\varepsilon}-X_{t-\delta}^{\varepsilon}|dt\right)^{p}\nonumber\\
&&+C_{p}\mathbb{E}\left(\int^{T}_{\delta}|X_{t(\delta)}^{\varepsilon}-X_{t-\delta}^{\varepsilon}|dt\right)^{p}.\label{FFX1}
\end{eqnarray}
It is easy to see that
\begin{eqnarray}
X_{t}^{\varepsilon}-X_{t-\delta}^{\varepsilon}=\!\!\!\!\!\!\!\!&&(e^{A\delta}-I)X_{t-\delta}^{\varepsilon}+\int_{t-\delta}^{t}e^{(t-s)A}B(X^{\varepsilon}_s, Y^{\varepsilon}_s)ds+\int_{t-\delta}^{t}e^{(t-s)A}dL_s  \nonumber\\
:=\!\!\!\!\!\!\!\!&&I_{1}(t)+I_{2}(t)+I_{3}(t). \label{REGX}
\end{eqnarray}

By Minkowski's inequality and Lemma \ref{SOX}, for any $1\leq p<\alpha$ and $\theta\in (0,2/\alpha)$ we have
\begin{eqnarray}  \label{REGX1}
\mathbb{E}\left(\int^{T}_{\delta}|I_{1}(t)| dt\right)^p
\leq\!\!\!\!\!\!\!\!&&C_p\delta^{\frac{\theta p}{2}}\left[\int^{T}_\delta\left(\mathbb{E} \|X^{\varepsilon}_{t-\delta}\|^p_{\theta}  \right)^{1/p}dt\right]^{p}\nonumber\\
\leq\!\!\!\!\!\!\!\!&&C_{p,T}\delta^{\frac{\theta p}{2}}(|x|^p+1).
\end{eqnarray}

The boundedness of $B$ imply that
\begin{eqnarray} \label{REGX3}
\mathbb{E}\left[\int^{T}_\delta\!\!\left|\int_{t-\delta}^{t}e^{(t-s)A}B(X^{\varepsilon}_s,Y^{\varepsilon}_s)ds\right |dt\right]^{p}
\!\!\!\leq\!\!\!\!\!\!\!\!&&C_{p,T}\delta^{p}
\end{eqnarray}

By Minkowski's inequality and \cite[(4.12)]{PZ}, we obtain
\begin{eqnarray} \label{REGX4}
\mathbb{E}\left[\int^{T}_\delta \left|\int_{t-\delta}^{t}e^{(t-s)A}dL_s \right|dt\right]^{p}\leq\!\!\!\!\!\!\!\!&&\left\{\int^{T}_\delta \left[\mathbb{E}\left|\int_{t-\delta}^{t}e^{(t-s)A}dL_s \right|^p\right]^{1/p}dt\right\}^{p}\nonumber\\
\leq\!\!\!\!\!\!\!\!&&\left\{\int^{T}_\delta\left[ \sum^{\infty}_{k=1}\frac{(1-e^{-\alpha\lambda_k \delta})\beta^{\alpha}_k}{\lambda_k}\right]^{1/\alpha}dt\right\}^{p}\nonumber\\
\leq\!\!\!\!\!\!\!\!&&\left\{\int^{T}_\delta\left[ \sum^{\infty}_{k=1}\frac{\beta^{\alpha}_k}{\lambda^{1-\alpha\theta/2}_k}\right]^{1/\alpha}dt\right\}^{p}\nonumber\\
\leq\!\!\!\!\!\!\!\!&&C_{p,T}\delta^{\frac{\theta p}{2}}.
\end{eqnarray}
Combining \eqref{REGX1}-\eqref{REGX4}, we obtain
\begin{eqnarray}\label{FFX2}
\mathbb{E}\left(\int^{T}_{\delta}|X_{t}^{\varepsilon}-X_{t-\delta}^{\varepsilon}|dt\right)^{p}\leq\!\!\!\!\!\!\!\!&&C_{p,T}\delta^{\frac{p\theta}{2}}(1+|x|^p).
\end{eqnarray}
Similar as the argument above, we also have
\begin{eqnarray}\label{FFX3}
\mathbb{E}\left(\int^{T}_{\delta}|X_{t(\delta)}^{\varepsilon}-X_{t-\delta}^{\varepsilon}|dt\right)^{p}\leq\!\!\!\!\!\!\!\!&&C_{p,T}\delta^{\frac{p\theta}{2}}(1+|x|^p).
\end{eqnarray}

Finally, \eref{FFX1}, (\ref{FFX2}) and (\ref{FFX3}) imply Lemma \ref{COX} holds. The proof is complete.
\end{proof}

\vskip 0.3cm

\subsection{The frozen and averaged equations}

For fixed $x\in H$, we consider the corresponding frozen equation:
\begin{eqnarray}\label{FEQ1}
dY_{t}=[AY_{t}+F(x,Y_{t})]dt+d Z_{t},\quad Y_{0}=y.
\end{eqnarray}
Since $F(x,\cdot)$ is Lipshcitz continuous, it is easy to check that the equation $\eref{FEQ1}$ has a unique mild solution $\{Y_{t}^{x,y}\}_{t\geq 0}$ for any initial value $y\in H$. Following a similar argument as in the proof of \eref{ess}, we can easily obtain
\begin{eqnarray*}
\sup_{t\geq 0}\EE|Y_{t}^{x,y}|^p\leq C_p(1+|y|^p).
\end{eqnarray*}

Let $P^{x}_t$ be the transition semigroup of $Y_{t}^{x,y}$, i.e., for any bounded measurable function $\varphi$ on $H$,
\begin{eqnarray*}
P^x_t \varphi(y)= \mathbb{E} \left[\varphi\left(Y_{t}^{x,y}\right)\right], \quad y \in H, t>0.
\end{eqnarray*}
Refer to \cite[Lemma 3.3]{BYY}, $\{P^x_t\}_{t\geq 0}$ admits a unique invariant measure $\mu^x$, which satisfies
$$
\sup_{x\in H}\int_{H}|z|^{p}\mu^{x}(dz)<\infty, 0<p<\alpha.
$$

Before proving the asymptotical behavior of $P^x_t$, we first give the following Lemma.

\begin{lemma} \label{L3.4} There exists a constant $C>0$ such that for any $x_1,x_2, y_1,y_2\in H$ and $t\geq 0$,
\begin{eqnarray*}
|Y^{x_1,y_1}_t-Y^{x_2,y_2}_t|^2\leq C|x_1-x_2|^{2\eta_3}+e^{-(\lambda_1-L_F) t}|y_1-y_2|^{2}.
\end{eqnarray*}
\end{lemma}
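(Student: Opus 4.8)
The plan is to write both processes via the mild/variation-of-constants formula and estimate the difference directly, exploiting the dissipativity coming from $\lambda_1 - L_F > 0$ together with the H\"older continuity of $F$ in the first variable. Concretely, set $R_t := Y^{x_1,y_1}_t - Y^{x_2,y_2}_t$. Since $Y^{x_i,y_i}$ solves $dY_t = [AY_t + F(x_i,Y_t)]dt + dZ_t$ with the \emph{same} driving noise $Z$, the stochastic integral cancels in the difference and $R_t$ satisfies a \emph{random ODE} in $H$:
\begin{eqnarray*}
\frac{d}{dt}R_t = A R_t + \big[F(x_1,Y^{x_1,y_1}_t) - F(x_2,Y^{x_2,y_2}_t)\big], \qquad R_0 = y_1 - y_2.
\end{eqnarray*}

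The main step is an energy estimate. Differentiating $|R_t|^2$ and using self-adjointness of $A$ with $\langle A R_t, R_t\rangle \le -\lambda_1 |R_t|^2$ (from \ref{A1}), together with the splitting
\begin{eqnarray*}
\big|F(x_1,Y^{x_1,y_1}_t) - F(x_2,Y^{x_2,y_2}_t)\big| \le C|x_1-x_2|^{\eta_3} + L_F |R_t|
\end{eqnarray*}
from \ref{A4}, one gets
\begin{eqnarray*}
\frac{d}{dt}|R_t|^2 \le -2\lambda_1|R_t|^2 + 2|R_t|\big(C|x_1-x_2|^{\eta_3} + L_F|R_t|\big) \le -(\lambda_1 - L_F)|R_t|^2 + \frac{C^2}{\lambda_1 - L_F}|x_1-x_2|^{2\eta_3},
\end{eqnarray*}
where in the last inequality I absorbed the cross term by Young's inequality $2|R_t|\,C|x_1-x_2|^{\eta_3} \le (\lambda_1-L_F)|R_t|^2 + \frac{C^2}{\lambda_1-L_F}|x_1-x_2|^{2\eta_3}$ and bounded $2L_F|R_t|^2 \le 2L_F|R_t|^2$, keeping $2\lambda_1 - 2L_F - (\lambda_1 - L_F) = \lambda_1 - L_F > 0$. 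Then Gronwall's lemma applied to $g(t) := |R_t|^2$ yields
\begin{eqnarray*}
|R_t|^2 \le e^{-(\lambda_1 - L_F)t}|y_1-y_2|^2 + \frac{C^2}{(\lambda_1 - L_F)^2}|x_1-x_2|^{2\eta_3}\big(1 - e^{-(\lambda_1-L_F)t}\big),
\end{eqnarray*}
which is exactly the claimed bound after renaming the constant (the factor $1 - e^{-(\lambda_1-L_F)t} \le 1$).

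One technical point to handle carefully is that $R_t$ need not lie in $\mathscr{D}(A)$, so the formal differentiation of $|R_t|^2$ must be justified in the mild sense; the standard remedy is to work with the Yosida approximations $A_n = nA(n-A)^{-1}$ (or equivalently apply It\^o's formula to the mild solution and note the martingale part vanishes because the noise cancels), derive the inequality for the approximations, and pass to the limit. I expect this regularization to be the only real obstacle; the rest is the routine dissipative Gronwall argument above. Once established, the estimate feeds directly into the exponential ergodicity of $P^x_t$ and the H\"older regularity of the averaged drift $\bar B$.
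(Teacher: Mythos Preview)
Your proposal is correct and follows essentially the same approach as the paper: both cancel the noise in the difference $R_t$, differentiate $|R_t|^2$, use $\langle AR_t,R_t\rangle\le -\lambda_1|R_t|^2$ together with assumption \ref{A4} and Young's inequality to obtain the differential inequality $\frac{d}{dt}|R_t|^2\le -(\lambda_1-L_F)|R_t|^2+C|x_1-x_2|^{2\eta_3}$, and then apply Gronwall (the paper calls it the comparison theorem). Your remark about justifying the energy computation via Yosida approximation is a legitimate technical caveat that the paper leaves implicit.
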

\begin{proof}
Note that for any $x_1,x_2, y_1,y_2\in H$ and $t\geq 0$,
\begin{eqnarray*}
d(Y^{x_1,y_1}_t-Y^{x_2,y_2}_t)=\!\!\!\!\!\!\!\!&&A(Y^{x_1,y_1}_t-Y^{x_2,y_2}_t)dt+\left[F(x_1, Y^{x_1,y_1}_t)-F(x_2, Y^{x_2,y_2}_t)\right]dt.
\end{eqnarray*}
By $\lambda_1-L_F>0$ in assumption \ref{A4} and Young's inequality, we have
\begin{eqnarray*}
&&\frac{d}{dt}|Y^{x_1,y_1}_t-Y^{x_2,y_2}_t|^2\\
=\!\!\!\!\!\!\!\!&&-2\|Y^{x_1,y_1}_t-Y^{x_2,y_2}_t\|_1^2+2\langle F(x_1, Y^{x_1,y_1}_t)-F(x_2, Y^{x_2,y_2}_t), Y^{x_1,y_1}_t-Y^{x_2,y_2}_t\rangle\\
\leq\!\!\!\!\!\!\!\!&&-2\lambda_1|Y^{x_1,y_1}_t-Y^{x_2,y_2}_t|^2+2L_F|Y^{x_1,y_1}_t-Y^{x_2,y_2}_t|^2+C|x_1-x_2|^{\eta_3}|Y^{x_1,y_1}_t-Y^{x_2,y_2}_t|\\
\leq\!\!\!\!\!\!\!\!&&-(\lambda_1-L_F)|Y^{x_1,y_1}_t-Y^{x_2,y_2}_t|^2+C|x_1-x_2|^{2\eta_3}.
\end{eqnarray*}
Using the comparison theorem, it follows
\begin{eqnarray*}
|Y^{x_1,y_1}_t-Y^{x_2,y_2}_t|^2\leq\!\!\!\!\!\!\!\!&&e^{-(\lambda_1-L_F)t}|y_1-y_2|^2+C\int^t_0 e^{-(\lambda_1-L_F)(t-s)}ds |x_1-x_2|^{2\eta_3}\\
\leq\!\!\!\!\!\!\!\!&&C|x_1-x_2|^{2\eta_3}+e^{-(\lambda_1-L_F)t}|y_1-y_2|^{2}.
\end{eqnarray*}
The proof is complete.
\end{proof}

Now, we give a position to prove the following exponential behavior of  transition semigroup $\{P^x_t\}_{t\geq 0}$.
\begin{proposition}\label{Ergodicity}
There exists $C>0$  such that for any H\"{o}lder continuous function $\varphi: H\rightarrow H$ with index $\beta\in (0,1)$ and $x,y\in H$,
\begin{equation*}
\Big|P^x_t\varphi(y)-\int_{H}\varphi(z)\mu^x(dz)\Big|\leq C(1+|y|^{\beta})\|\varphi\|_{C^{\beta}}e^{-\frac{(\lambda_1-L_F)\beta t}{2}}.
\end{equation*}
\end{proposition}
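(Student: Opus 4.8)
The plan is to combine the invariance of $\mu^x$ with the exponential contraction in the initial datum supplied by Lemma \ref{L3.4}. First, since $\mu^x$ is invariant for $\{P^x_t\}_{t\geq 0}$, one writes
$$\int_H \varphi(z)\,\mu^x(dz)=\int_H P^x_t\varphi(z)\,\mu^x(dz)=\int_H \mathbb{E}\big[\varphi\big(Y^{x,z}_t\big)\big]\,\mu^x(dz),$$
so that, by linearity,
$$P^x_t\varphi(y)-\int_H \varphi(z)\,\mu^x(dz)=\int_H \mathbb{E}\big[\varphi\big(Y^{x,y}_t\big)-\varphi\big(Y^{x,z}_t\big)\big]\,\mu^x(dz).$$
The exchange of $\mathbb{E}$ and the $\mu^x$-integration here is justified by Fubini's theorem, using the at-most-$\beta$-growth $|\varphi(u)|\leq|\varphi(0)|+\|\varphi\|_{C^\beta}|u|^\beta$ together with the moment bound $\sup_{t\geq0}\mathbb{E}|Y^{x,y}_t|^p\leq C_p(1+|y|^p)$ and the uniform bound $\sup_{x\in H}\int_H|z|^p\,\mu^x(dz)<\infty$ for $0<p<\alpha$ recorded above.

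Next I would estimate the integrand. By $\beta$-H\"older continuity of $\varphi$,
$$\big|\varphi\big(Y^{x,y}_t\big)-\varphi\big(Y^{x,z}_t\big)\big|\leq \|\varphi\|_{C^\beta}\,\big|Y^{x,y}_t-Y^{x,z}_t\big|^{\beta},$$
and applying Lemma \ref{L3.4} with $x_1=x_2=x$ gives $|Y^{x,y}_t-Y^{x,z}_t|^2\leq e^{-(\lambda_1-L_F)t}|y-z|^2$ pointwise in $\omega$, hence, raising to the power $\beta/2$,
$$\big|Y^{x,y}_t-Y^{x,z}_t\big|^{\beta}\leq e^{-\frac{(\lambda_1-L_F)\beta t}{2}}|y-z|^{\beta}.$$
Taking expectations and integrating against $\mu^x$ therefore yields
$$\Big|P^x_t\varphi(y)-\int_H\varphi(z)\,\mu^x(dz)\Big|\leq \|\varphi\|_{C^\beta}\,e^{-\frac{(\lambda_1-L_F)\beta t}{2}}\int_H|y-z|^{\beta}\,\mu^x(dz).$$

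Finally I would absorb the last integral: since $\beta\in(0,1)$, subadditivity of $t\mapsto t^\beta$ gives $|y-z|^\beta\leq|y|^\beta+|z|^\beta$, so
$$\int_H|y-z|^{\beta}\,\mu^x(dz)\leq |y|^\beta+\int_H|z|^\beta\,\mu^x(dz)\leq C\big(1+|y|^\beta\big),$$
using $\sup_{x\in H}\int_H|z|^\beta\,\mu^x(dz)<\infty$. Combining the last two displays gives the claim. There is no genuine obstacle here; the only points requiring a word of care are the integrability/Fubini justification mentioned above and the fact that the constant $C$ is independent of $x$ (which is guaranteed by the uniformity in $x$ of both the moment bound on $\mu^x$ and the contraction rate $\lambda_1-L_F$ in Lemma \ref{L3.4}).
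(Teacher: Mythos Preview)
Your proposal is correct and follows essentially the same route as the paper: rewrite the difference via the invariance of $\mu^x$, apply $\beta$-H\"older continuity of $\varphi$, use Lemma~\ref{L3.4} with $x_1=x_2=x$ for the exponential contraction, and conclude with the uniform moment bound $\sup_{x}\int_H|z|^\beta\,\mu^x(dz)<\infty$. You are simply a bit more explicit than the paper about the Fubini step and the subadditivity estimate $|y-z|^\beta\leq|y|^\beta+|z|^\beta$.
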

\begin{proof}
For any given H\"{o}lder continuous function $\varphi:H\rightarrow H$ with index $\beta\in (0,1)$. By the definition of invariant measure and Lemma \ref{L3.4}, we have
\begin{eqnarray*}
\left|P^{x}_t\varphi(y)-\int_{H}\varphi(z)\mu^{x}(dz)\right|\leq\!\!\!\!\!\!\!\!&&\int_{H}\left|\mathbb{E}\varphi(Y^{x,y}_t)-\EE\varphi(Y^{x,z}_t)\right|\mu^{x}(dz)\\
\leq\!\!\!\!\!\!\!\!&&\|\varphi\|_{C^{{\beta}}}\int_{H}\mathbb{E}\left|Y^{x,y}_t-Y^{x,z}_t\right|^{\beta}\mu^{x}(dz)\\
\leq\!\!\!\!\!\!\!\!&&\|\varphi\|_{C^{{\beta}}}\int_{H}e^{-\frac{(\lambda_1-L_F)\beta t}{2} }|y-z|^{\beta}\mu^{x}(dz)\\
\leq\!\!\!\!\!\!\!\!&&C(1+|y|^{\beta})\|\varphi\|_{C^{{\beta}}}e^{-\frac{(\lambda_1-L_F)\beta t}{2} },
\end{eqnarray*}
where the last inequality comes from $\int_{H}|z|^{\beta}\mu^{x}(dz)<\infty$. The proof is complete.
\end{proof}

Next, we consider the averaged equation, i.e.,
\begin{equation}
\displaystyle d\bar{X}_{t}=A\bar{X}_{t}dt+\bar{B}(\bar{X}_{t})dt+dL_t,\quad
\bar{X}_{0}=x, \label{3.1}
\end{equation}
where $\bar{B}(x):=\int_{H}B(x,y)\mu^{x}(dy)$ and $\mu^{x}$ is the unique invariant measure of the transition semigroup of  equation $\eref{FEQ1}$.

\vskip 0.1cm

\begin{lemma}\label{L3.8}
For any $x\in H$, the equation \eref{3.1} has a unique mild solution $\bar{X}$.
Moreover, for any $T>0$ and $1\leq p<\alpha$, there exists a constant $C_{p,T}>0$ such that
\begin{align}
\sup_{t\in[0,T]}\mathbb{E}|\bar{X}_{t}|^{p}\leq C_{p,T}(1+|x|^{p}).\label{EbarX}
\end{align}
\end{lemma}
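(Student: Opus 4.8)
The plan is to establish existence and uniqueness first, and then the moment bound \eref{EbarX}. For well-posedness of \eref{3.1}, the key point is that the drift $\bar B(x)=\int_H B(x,y)\mu^x(dy)$ is bounded (since $B$ is bounded, so $|\bar B(x)|\leq \|B\|_\infty$ for all $x$) and, crucially, H\"older continuous in $x$. The H\"older continuity of $\bar B$ is exactly the statement one expects from Assumption \ref{A4} together with the ergodic estimate in Proposition \ref{Ergodicity} and the contraction estimate in Lemma \ref{L3.4}; I would either cite it here or note that it is proved in the following subsection/section (it is the technical heart referred to in Remark~2.2). Granting that $\bar B\in C^{\eta}_b(H,H)$ for the appropriate index $\eta=\eta_1\wedge\eta_3$, equation \eref{3.1} is a semilinear SPDE driven by the cylindrical $\alpha$-stable process $L_t$ with bounded H\"older-continuous drift, so its existence and uniqueness of a mild solution follow directly from \cite[Theorem 2.2]{SXX0}, exactly as in the proof of Lemma \ref{PMY} for the coupled system (the averaged equation is in fact simpler, being a single equation with no fast component).

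For the moment bound, I would follow verbatim the computation already carried out for $X^\varepsilon_t$ in the proof of Lemma \ref{PMY}. Writing the mild formulation
\begin{eqnarray*}
\bar X_t = e^{tA}x + \int_0^t e^{(t-s)A}\bar B(\bar X_s)\,ds + \int_0^t e^{(t-s)A}\,dL_s,
\end{eqnarray*}
I split $\EE|\bar X_t|^p$ into three terms. The first is controlled by \eref{SP1}, giving $e^{-\lambda_1 p t}|x|^p\leq |x|^p$. The second is bounded using $|\bar B|\leq\|B\|_\infty$ and \eref{SP1}, so $\big|\int_0^t e^{(t-s)A}\bar B(\bar X_s)ds\big|\leq \|B\|_\infty\int_0^t e^{-\lambda_1(t-s)}ds\leq \|B\|_\infty/\lambda_1$, which is a constant depending only on $T$ (indeed independent of $T$). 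The third is exactly the stochastic convolution estimate \eref{Ls}/\eref{Zs} with the $L$-noise: by Remark \ref{remark 2.2} (with $\theta=0$, using $\sum_n\beta_n^\alpha<\infty$ from \ref{A2}), $\sup_{t\geq0}\EE\big|\int_0^t e^{(t-s)A}dL_s\big|^p\leq C_p(\sum_n\beta_n^\alpha)^{p/\alpha}<\infty$. Combining these three bounds via the elementary inequality $|a+b+c|^p\leq C_p(|a|^p+|b|^p+|c|^p)$ yields $\sup_{t\in[0,T]}\EE|\bar X_t|^p\leq C_{p,T}(1+|x|^p)$.

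The only genuine obstacle here is the H\"older continuity of $\bar B$, which is needed to invoke \cite[Theorem 2.2]{SXX0} for well-posedness; everything else is a routine repetition of estimates already established. If the authors prove $\bar B\in C^\eta_b$ later (as Remark~2.2 suggests, this is where the Lipschitz-in-$y$ hypothesis on $F$ is used), then the present lemma should cite that fact; alternatively, one can observe that even mere boundedness plus measurability of $\bar B$ does \emph{not} suffice for uniqueness, so the H\"older regularity really is essential and must be invoked. Given the structure of the paper, I expect the intended proof is the short one: "$\bar B$ is bounded and H\"older continuous (see below / by \ref{A4}, Lemma \ref{L3.4} and Proposition \ref{Ergodicity}), hence \eref{3.1} is well-posed by \cite[Theorem 2.2]{SXX0}, and \eref{EbarX} follows as in the proof of Lemma \ref{PMY}."
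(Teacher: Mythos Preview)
Your overall strategy matches the paper's: show $\bar B$ is bounded and H\"older continuous, then invoke \cite[Theorem 2.2]{SXX0} for well-posedness and repeat the moment computation from Lemma \ref{PMY}. The moment-bound part of your proposal is correct and essentially identical to the paper's argument.

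However, two points need correction. First, the H\"older index you state, $\eta_1\wedge\eta_3$, is not the one the argument actually produces. Since $B$ is only $\eta_2$-H\"older in its second variable and, by Lemma \ref{L3.4}, $|Y^{x_1,0}_t-Y^{x_2,0}_t|\leq C|x_1-x_2|^{\eta_3}$, the composition yields $|B(x_2,Y^{x_1,0}_t)-B(x_2,Y^{x_2,0}_t)|\leq C|x_1-x_2|^{\eta_2\eta_3}$. Hence the correct index is $\eta_1\wedge(\eta_2\eta_3)$, which is exactly the quantity appearing in assumption \ref{A4} and used throughout Section 4.

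Second, and more importantly, you defer the H\"older continuity of $\bar B$ to ``the following subsection/section'', but in the paper this estimate \eref{HObarB} is not proved elsewhere: it \emph{is} the substance of the proof of Lemma \ref{L3.8}. The paper's argument writes, for each $t>0$,
\[
|\bar B(x_1)-\bar B(x_2)|\leq \sum_{i=1}^2\Big|\EE B(x_i,Y^{x_i,0}_t)-\int_H B(x_i,z)\mu^{x_i}(dz)\Big|+\EE|B(x_1,Y^{x_1,0}_t)-B(x_2,Y^{x_2,0}_t)|,
\]
bounds the first two terms by $Ce^{-(\lambda_1-L_F)\eta_2 t/2}$ via Proposition \ref{Ergodicity}, bounds the last by $C|x_1-x_2|^{\eta_1\wedge(\eta_2\eta_3)}$ using \ref{A4} and Lemma \ref{L3.4} (together with the boundedness of $B$ to interpolate exponents), and then lets $t\to\infty$. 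Your proposal identifies the right ingredients (Proposition \ref{Ergodicity} and Lemma \ref{L3.4}) but does not assemble them; since this is the only nontrivial step in the lemma, it should be carried out rather than cited.
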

\begin{proof}
It is sufficient to show that the coefficient $\bar{B}$ is bounded and H\"{o}lder continuous with index $\eta_1\wedge(\eta_2\eta_3)$, i.e.,
\begin{eqnarray}
|\bar{B}(x_1)-\bar{B}(x_2)|\leq C|x_1-x_2|^{\eta_1\wedge(\eta_2\eta_3)},\quad  x_1,x_2\in H.\label{HObarB}
\end{eqnarray}
Then the existence and uniqueness of the mild solution and estimate \eref{EbarX} can be proved by a similar argument as in the proof of Lemma \ref{PMY}.

In fact, the boundedness of $\bar{B}$ holds obviously due to the boundedness of $B$. By Proposition \ref{Ergodicity} and Lemma \ref{L3.4}, we obtain
\begin{eqnarray*}
|\bar{B}(x_1)-\bar{B}(x_2)|=\!\!\!\!\!\!\!\!&&\left|\int_{H} B(x_1,z)\mu^{x_1}(dz)-\int_{H} B(x_2,z)\mu^{x_2}(dz)\right|\\
\leq\!\!\!\!\!\!\!\!&&\left|\int_{H} B(x_1,z)\mu^{x_1}(dz)-{\EE} B(x_1, Y^{x_1,0}_t)\right|\\
&&+\left|  \EE B(x_2, Y^{x_2,0}_t)-\int_{H} B(x_2,z)\mu^{x_2}(dz)\right|\\
&&+ {\EE} \left|B(x_1, Y^{x_1,0}_t)-B(x_2, Y^{x_1,0}_t)\right|+ {\EE} \left|B(x_2, Y^{x_1,0}_t)-B(x_2, Y^{x_2,0}_t)\right|\\
\leq\!\!\!\!\!\!\!\!&&C e^{-\frac{(\lambda_1-L_F)\eta_2 t}{2}}+C{\EE} \left|B(x_1, Y^{x_1,0}_t)-B(x_2, Y^{x_1,0}_t)\right|^{\frac{\eta_1\wedge(\eta_2\eta_3)}{\eta_1}}\\
&&+ C{\EE} \left|B(x_2, Y^{x_1,0}_t)-B(x_2, Y^{x_2,0}_t)\right|^{\frac{\eta_1\wedge(\eta_2\eta_3)}{\eta_2\eta_3}}\\
\leq\!\!\!\!\!\!\!\!&&C e^{-\frac{(\lambda_1-L_F)\eta_2 t}{2}}+C|x_1-x_2|^{\eta_1\wedge(\eta_2\eta_3)},
\end{eqnarray*}
where the second inequality comes from the boundedness of $B$. Hence, letting $t\rightarrow \infty$, we obtain (\ref{HObarB}). The proof is complete.
\end{proof}

\vskip 0.3cm

\section{Main result and its proof}

In this section, we devote to proving our main result. The main techniques are based on Zvonkin's transformation (see subsection 4.1) and Khasminkii's time discretization (see subsection 4.2).
The former is  now widely used to study the strong well-posedness for S(P)DEs with singular coefficients (see e.g. \cite{DF,DFPR,KR}), and the latter is the classical method used to study the
averaging principle for kinds of slow-fast S(P)DEs (see e.g. \cite{C1,K1}).

\subsection{The Zvonkin's transformation}

Since the coefficients of the system \eref{Main equation} are only H\"older continuous, the classical Khasminskii's time discreatization can't be used to prove our main result directly.
Inspired from \cite{V0}, the key technique here is using the Zvonkin's transformation, i.e., changing the singular coefficients to regular ones.

Now, considering the following partial differential equation in $H$:
\begin{equation}\label{MPDE}
\lambda U_{\lambda}(x)-\bar{\mathscr{L}}U_{\lambda}(x)=\bar{B}(x),\quad x\in H,
\end{equation}
where $\lambda>0$ and $\bar{\mathscr{L}}$ is the infinitesimal generator of the averaged equation \eref{3.1}, i.e.,
\begin{eqnarray}\label{gege}
\bar{\mathscr{L}}f(x):=\!\!\!\!\!\!\!\!&&\langle Ax, Df(x)\rangle+\langle \bar{B}(x), Df(x)\rangle\nonumber\\
&&\!\!+\sum_{k\geq 1}\beta^{\alpha}_k\int_\mathbb{R}\left[f(x+{e_k}z)-f(x)-\langle Df(x),{e_k}z\rangle 1_{|z|\leq c}\right]\frac{C_\alpha}{|z|^{1+\alpha}}dz.
\end{eqnarray}

We first state the following key Lemma.
\begin{lemma}\label{ppp}
For $\lambda>0$ and $ \theta'\in(0,\eta_1\wedge(\eta_2\eta_3))$, there exists a function $U_{\lambda}\in C^{\gamma+\theta'}_b(H,H)$ satisfying the following integral equation:
\begin{align}
U_{\lambda}(x)=\int_0^\infty\!\e^{-\lambda t}T_t\Big(\langle \bar{B}, DU_{\lambda}\rangle+\bar{B}\Big)(x)dt,   \label{inte}
\end{align}
where $T_t f(x):=\EE[f(M^{x}_t)]$ for any $f\in B_b(H,H)$ (the set of all bounded and Borel measurable function from $H$ to $H$), $M_t^x$ is the unique solution of following equation:
$$dM_t^x=AM_t^xdt+dL_t,\quad M_0^x=x\in H.$$
Moreover, $U_{\lambda}$ also solves equation \eref{MPDE} and the following estimates hold:
\begin{eqnarray}
\|U_{\lambda}\|_{\gamma+\theta^{'}}\leq \tilde{C}_{\lambda}\|\bar{B}\|_{C^{\eta_1\wedge(\eta_2\eta_3)}_b},\quad \forall \theta'\in(0,\eta_1\wedge(\eta_2\eta_3));\label{BU}\end{eqnarray}
\begin{eqnarray}\|(-A)^{\kappa_1}DU_{\lambda}\|_{\infty}\leq C\|\bar{B}\|_{C^{\eta_1\wedge(\eta_2\eta_3)}_b},\label{BU2}
\end{eqnarray}
where $\tilde{C}_{\lambda}>0$ is a constant which satisfies $\lim_{\lambda\to \infty}\tilde{C}_{\lambda}=0$.
\end{lemma}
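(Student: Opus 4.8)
The plan is to construct $U_\lambda$ as the fixed point of the integral operator on the right-hand side of \eref{inte}, relying on the smoothing property of the Markov semigroup $T_t$ associated to the Ornstein--Uhlenbeck-type process $M^x_t$ driven by the cylindrical $\alpha$-stable noise $L$. First I would record the regularization estimates for $T_t$: for $f\in C^{\beta}_b(H,H)$ with $\beta\in(0,1)$, one has $\|T_t f\|_{C^{\gamma+\theta'}_b}\leq C t^{-(\gamma+\theta'-\beta)/\alpha}\Lambda_t^{\gamma'}\|f\|_{C^{\beta}_b}$ for a suitable $\gamma'<\gamma$, together with a companion bound controlling $\|(-A)^{\kappa_1}DT_tf\|_\infty$ in terms of $\Lambda_{3,\kappa_1}(t)$. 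These are exactly the quantities whose Laplace transforms are assumed finite in \ref{A3}, so $\int_0^\infty e^{-\lambda t}\,t^{-(\gamma+\theta'-\beta)/\alpha}\Lambda_t^{\gamma'}dt<\infty$ and $\int_0^\infty e^{-\lambda t}\Lambda_{3,\kappa_1}(t)dt<\infty$ for every $\lambda>0$, and the first integral can be made small by taking $\lambda$ large (dominated convergence). This is where the constant $\tilde C_\lambda\to 0$ comes from. Such estimates are the heart of \cite{SXX0}, so I would cite that paper for the precise form.

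Next I would set up the contraction. Define $\Gamma V(x):=\int_0^\infty e^{-\lambda t}T_t(\langle\bar B,DV\rangle+\bar B)(x)\,dt$ on the Banach space $C^{\gamma+\theta'}_b(H,H)$. Since $\bar B\in C^{\eta_1\wedge(\eta_2\eta_3)}_b$ by \eref{HObarB} in Lemma \ref{L3.8} and $\gamma+\theta'>1$ ensures $DV$ is bounded and $(\gamma+\theta'-1)$-H\"older, the product $\langle\bar B,DV\rangle+\bar B$ lies in $C^{\beta}_b$ with $\beta=\eta_1\wedge(\eta_2\eta_3)\wedge(\gamma+\theta'-1)$; note $\theta'<\eta_1\wedge(\eta_2\eta_3)$ is chosen precisely so that $\gamma+\theta'-\beta<\gamma$, keeping the time-singularity integrable against $e^{-\lambda t}\Lambda_t^{\gamma'}$. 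Then $\|\Gamma V_1-\Gamma V_2\|_{C^{\gamma+\theta'}_b}\leq \tilde C_\lambda\|\bar B\|_\infty\|V_1-V_2\|_{C^{\gamma+\theta'}_b}$, and for $\lambda$ large enough $\tilde C_\lambda\|\bar B\|_\infty<1$, so Banach's fixed point theorem gives a unique $U_\lambda\in C^{\gamma+\theta'}_b(H,H)$. The bound \eref{BU} follows from $\|U_\lambda\|_{C^{\gamma+\theta'}_b}\leq \|\Gamma 0\|_{C^{\gamma+\theta'}_b}+\tilde C_\lambda\|\bar B\|_\infty\|U_\lambda\|_{C^{\gamma+\theta'}_b}$ and absorbing, while \eref{BU2} follows by applying $(-A)^{\kappa_1}D$ under the integral and using the \ref{A31} estimate on $\|(-A)^{\kappa_1}DT_tg\|_\infty$ with $g=\langle\bar B,DU_\lambda\rangle+\bar B$, which is bounded by the already-established \eref{BU}.

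Finally I would verify that the fixed point $U_\lambda$ of \eref{inte} actually solves the PDE \eref{MPDE}. The standard argument is that $t\mapsto T_t$ is a $C_0$-semigroup whose generator, acting on sufficiently regular functions, is $\mathscr{L}_0 f(x)=\langle Ax,Df(x)\rangle+\sum_{k}\beta^\alpha_k\int_{\mathbb R}[f(x+e_kz)-f(x)-\langle Df(x),e_kz\rangle 1_{|z|\leq c}]\frac{C_\alpha}{|z|^{1+\alpha}}dz$; since $U_\lambda\in C^{\gamma+\theta'}_b$ with $\gamma+\theta'>1$, it lies in a core and the resolvent identity $\lambda U_\lambda-\mathscr{L}_0 U_\lambda=\langle\bar B,DU_\lambda\rangle+\bar B$ holds, which rearranges to $\lambda U_\lambda-\bar{\mathscr L}U_\lambda=\bar B$ after moving the drift term $\langle\bar B,DU_\lambda\rangle$ to the left and recognizing $\bar{\mathscr L}=\mathscr{L}_0+\langle\bar B,D\cdot\rangle$ from \eref{gege}. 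I expect the main obstacle to be the first step: establishing the precise Schauder-type smoothing estimates for $T_t$ in the scale $C^\beta_b\to C^{\gamma+\theta'}_b$ uniformly in the infinite-dimensional setting, with the correct time-singularity rate expressed through $\Lambda_t$ and $\Lambda_{3,\kappa_1}(t)$ so that assumptions \ref{A2}--\ref{A3} make the Laplace transforms converge; the rest is a routine fixed-point and semigroup-generator argument. In the write-up I would lean on \cite{SXX0,PZ} for these estimates rather than reproving them.
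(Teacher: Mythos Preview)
Your proposal is correct and follows the same approach as the paper. In fact, the paper is even more terse: it simply cites \cite[Theorem 3.3]{SXX0} for the existence of the fixed point, the fact that it solves \eref{MPDE}, and the estimate \eref{BU}, and then proves only \eref{BU2} explicitly---by exactly the argument you outline, namely the bound $\|(-A)^{\kappa_1}DT_tf\|_\infty\leq C\Lambda_{3,\kappa_1}(t)\|f\|_\infty$ combined with \eref{A31} applied to $g=\langle\bar B,DU_\lambda\rangle+\bar B$.
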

\begin{proof}
The existence of solution of equation \eref{inte}, its solution solves equation \eref{MPDE} and the estimate \eref{BU} have been proved follows a standard argument (see e.g. \cite[Theorem 3.3]{SXX0}). Thus we only prove $\eref{BU2}$ here.

By following a similar argument as in the proof of \cite[(3.3)]{SXX0}, it is easy to prove that for any $\kappa_1\in(0,1/2)$,
$$\|(-A)^{\kappa_1}D T_t f\|_{\infty}\leq C\Lambda_{3,\kappa_1}(t)\|f\|_{\infty},$$
where  $\Lambda_{3,\kappa_1}(t) := \sup\limits_{n\geq1}\frac{e^{-\lambda_nt}\lambda^{\kappa_1+1/\alpha}_n}{\beta_n}$. Thus by assumption \eref{A31}, we final get
\begin{align*}
\|(-A)^{\kappa_1}DU_{\lambda}\|_{\infty}&= \int_0^\infty\!\e^{-\lambda t}\left\|(-A)^{\kappa_1}DT_t\Big(\langle \bar{B}, DU_{\lambda}\rangle+\bar{B}\Big)\right\|_{\infty}dt\\
&\leq C\int_0^\infty\!\e^{-\lambda t}\Lambda_{3,\kappa_1}(t)dt\cdot\|\langle \bar{B}, DU_{\lambda}\rangle+\bar{B}\|_{\infty}\\
&\leq C\|\bar{B}\|_{C^{\eta_1\wedge(\eta_2\eta_3)}_b}.
\end{align*}
The whole proof is finished.
\end{proof}

Now, we have the following Zvonkin's transformation.
\begin{lemma}
For any given $\lambda>0$, let $U_{\lambda}$ be the solution of equation \eref{MPDE}. Then the solution $\bar X_t$ of the averaged equation \eref{3.1} satisfies
\begin{eqnarray}
\bar X_t=\!\!\!\!\!\!\!\!&&e^{tA}(x+U_{\lambda}(x))+\int^t_0 e^{(t-s)A}\lambda U_{\lambda}(\bar X_s)ds-U_{\lambda}(\bar X_t)-\int^t_0 Ae^{(t-s)A}U_{\lambda}(\bar X_s)ds\nonumber\\
&&\!\!\!\!+\int^t_0 e^{(t-s)A}d L_s+\sum_{k\geq 1}\int^t_0 \int_\mathbb{R}e^{(t-s)A}\left[U_{\lambda}(\bar{X}_{s^-}+\beta_k e_k z)-U_{\lambda}(\bar{X}_{s^-})\right]\tilde{N}^{1,k}(ds,dz)\label{FbarX}
\end{eqnarray}
and
\begin{eqnarray}
X^{\vare}_t=\!\!\!\!\!\!\!\!&&e^{tA}(x+U_{\lambda}(x))+\int^t_0 e^{(t-s)A}\lambda U_{\lambda}(X^{\vare}_s)ds-U_{\lambda}(X^{\vare}_t)-\int^t_0 Ae^{(t-s)A}U_{\lambda}(X^{\vare}_s)ds\nonumber\\
&&\!\!+\int^t_0 e^{(t-s)A}d L_s+\sum_{k\geq 1}\int^t_0 \int_\mathbb{R}e^{(t-s)A}\left[U_{\lambda}(X^{\varepsilon}_{s^-}+\beta_k e_k z)-U_{\lambda}(X^{\varepsilon}_{s^-})\right]\tilde{N}^{1,k}(ds,dz)\nonumber\\
&&\!\!+\int^t_0 e^{(t-s)A}\langle I+DU_{\lambda}(X^{\vare}_s), B(X^{\vare}_s, Y^{\vare}_s)-\bar{B}(X^{\vare}_s)\rangle ds,\label{FX}
\end{eqnarray}
where $I$ is the identical operator.
\end{lemma}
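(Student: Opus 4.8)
The plan is to apply Itô's formula for Hilbert-space-valued jump processes to the function $(t,x)\mapsto e^{tA}\big(x+U_\lambda(x)\big)$, or equivalently to treat $U_\lambda(\bar X_t)$ and $U_\lambda(X^\varepsilon_t)$ via Itô's formula and then rearrange. Since $U_\lambda\in C^{\gamma+\theta'}_b(H,H)$ with $\gamma+\theta'>1$ by Lemma~\ref{ppp}, the function $U_\lambda$ is $C^1$ with Hölder-continuous derivative, so the Itô formula for SDEs driven by the cylindrical $\alpha$-stable process $L_t$ is applicable; the jump part of $L_t=\sum_k\beta_k L^k_t e_k$ contributes, for each $k$, a compensated integral against $\tilde N^{1,k}(ds,dz)$ of the increment $U_\lambda(\bar X_{s^-}+\beta_k e_k z)-U_\lambda(\bar X_{s^-})$, plus a compensator term which, together with the drift term coming from $\langle DU_\lambda(\bar X_s), A\bar X_s+\bar B(\bar X_s)\rangle$, reconstructs exactly $\bar{\mathscr L}U_\lambda(\bar X_s)$ as in \eref{gege}.

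The key algebraic step is to use the PDE \eref{MPDE}: since $U_\lambda$ solves $\lambda U_\lambda-\bar{\mathscr L}U_\lambda=\bar B$, the drift generated by applying the generator to $U_\lambda$ along the trajectory is $\bar{\mathscr L}U_\lambda(\bar X_s)=\lambda U_\lambda(\bar X_s)-\bar B(\bar X_s)$. Thus, writing $\bar X_t=e^{tA}x+\int_0^t e^{(t-s)A}\bar B(\bar X_s)ds+\int_0^t e^{(t-s)A}dL_s$ from \eref{3.1} and adding $U_\lambda(\bar X_t)$ to both sides, the $\bar B(\bar X_s)$ drift in the mild formulation cancels against the $-\bar B(\bar X_s)$ produced by the generator, leaving the $\lambda U_\lambda(\bar X_s)$ term, the semigroup-corrected term $-\int_0^t Ae^{(t-s)A}U_\lambda(\bar X_s)ds$ (coming from integrating the $\langle Ax,DU_\lambda\rangle$ piece together with the mild/variation-of-constants bookkeeping), the $-U_\lambda(\bar X_t)$ term, the noise term $\int_0^t e^{(t-s)A}dL_s$, and the compensated jump term; this is precisely \eref{FbarX}. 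For \eref{FX} one repeats the computation for $X^\varepsilon_t$, whose mild form \eref{PMY} has drift $B(X^\varepsilon_s,Y^\varepsilon_s)$ rather than $\bar B(X^\varepsilon_s)$; the generator still produces $-\bar B(X^\varepsilon_s)$, so the leftover drift is $\langle I+DU_\lambda(X^\varepsilon_s),B(X^\varepsilon_s,Y^\varepsilon_s)-\bar B(X^\varepsilon_s)\rangle$ after grouping $B(X^\varepsilon_s,Y^\varepsilon_s)$ with the $DU_\lambda$-weighted copy of it, which gives the extra last line of \eref{FX}.

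The main obstacle is justifying the Itô formula rigorously in this infinite-dimensional, unbounded-generator, heavy-tailed setting: $\bar X_t$ only lives in $H$ (not in $\mathscr D(A)$), $U_\lambda$ is merely $C^{1+(\gamma+\theta'-1)}$ rather than $C^2$, and $L_t$ has only finite $p$-th moments for $p<\alpha$. The standard way around this is to work with the mild (variation-of-constants) formulation throughout: one applies Itô's formula to $U_\lambda$ along the Yosida/Galerkin approximations $X^{\varepsilon,n}$, $\bar X^n$ (replacing $A$ by $A_n=nA(n-A)^{-1}$ and projecting the noise), obtains the identity at the approximate level where everything is classical, and then passes to the limit $n\to\infty$ using the a-priori bounds from Lemma~\ref{PMY}, the regularity $\|U_\lambda\|_{\gamma+\theta'}<\infty$ and $\|(-A)^{\kappa_1}DU_\lambda\|_\infty<\infty$ from Lemma~\ref{ppp} (the latter is what makes $\int_0^t Ae^{(t-s)A}U_\lambda(\bar X_s)ds$ and the singular jump compensator well-defined), together with dominated convergence for the compensated Poisson integrals in $L^p$, $p<\alpha$. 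Since the problem says we may invoke earlier results, I would state that this derivation follows the now-standard Zvonkin-transformation computation, cite \cite{SXX0} for the analogous single-equation case, and present the two identities as a direct consequence; the verification that the individual terms are well-defined is exactly where estimates \eref{BU} and \eref{BU2} are used.
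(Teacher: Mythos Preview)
Your proposal is correct and follows essentially the same route as the paper: apply It\^o's formula to $U_\lambda(\bar X_t)$ (resp.\ $U_\lambda(X^\varepsilon_t)$), use the PDE $\lambda U_\lambda-\bar{\mathscr L}U_\lambda=\bar B$ to replace the generator term, substitute the resulting expression for $\bar B(\bar X_t)\,dt$ back into the equation, and pass to the mild formulation via variation of constants and integration by parts; for $X^\varepsilon$ the extra drift $B(X^\varepsilon_s,Y^\varepsilon_s)-\bar B(X^\varepsilon_s)$ survives exactly as you describe.

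The one place you work harder than necessary is the justification of It\^o's formula. You flag as ``the main obstacle'' that $U_\lambda$ is only $C^{1+(\gamma+\theta'-1)}$ rather than $C^2$, and propose a Yosida/Galerkin passage to the limit. The paper avoids this entirely: since the driving noise is $\alpha$-stable with $\alpha<2$, It\^o's formula holds for any $\varphi\in C^r_b(H,H)$ with $r>\alpha$ (this is \cite[Lemma 4.1]{SXX0}), and assumption~\ref{A4} gives $\eta_1\wedge(\eta_2\eta_3)>1+\alpha/2-\gamma$, so one may choose $\theta'$ close enough to $\eta_1\wedge(\eta_2\eta_3)$ that $\gamma+\theta'>1+\alpha/2>\alpha$. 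Hence It\^o's formula applies directly to $U_\lambda\in C^{\gamma+\theta'}_b$ with no approximation needed.
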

\begin{proof}
By Lemma \ref{ppp}, we have $U_{\lambda}\in C^{\gamma+\theta'}_b(H,H)$ for any $\theta'\in(0, \eta_1\wedge(\eta_2\eta_3))$. Then by It\^o's formula, we have
\begin{eqnarray*}
dU_{\lambda}(\bar{X}_t)=\!\!\!\!\!\!\!\!&&\bar{\mathscr{L}} U_{\lambda}(\bar{X}_t)dt+\sum_{k\geq1}\int_{\RR}\left[U_{\lambda}(\bar{X}_{t-}+\beta_k e_k z)-U_{\lambda}(\bar{X}_{t-})\right]\tilde{N}^{1,k}(dt, dz)\\\\
=\!\!\!\!\!\!\!\!&&\lambda U_{\lambda}(\bar X_t)dt-\bar B(\bar X_t)dt+\sum_{k\geq 1}\int_\mathbb{R}[U_{\lambda}(\bar{X}_{t^-}+\beta_k e_k z)-U_{\lambda}(\bar{X}_{t^-})]\tilde{N}^{1,k}(dt,dz).
\end{eqnarray*}
Note that the $\alpha$-stable process is considered in our case,  we can apply It\^o's formula on $\bar{X}_t$ for any $\varphi\in C^{r}_b(H, H )$ for any $r>\alpha$ (see \cite[Lemma 4.1]{SXX0}). So we can take $\theta'$ close to $\eta_1\wedge(\eta_2\eta_3)$ such that $\gamma+\theta'>\alpha$. As a result, it follows
\begin{eqnarray*}
\bar B(\bar X_t)dt=\lambda U_{\lambda}(\bar X_t)dt-dU_{\lambda}(\bar{X}_t)+\sum_{k\geq 1}\int_\mathbb{R}[U_{\lambda}(\bar{X}_{t^-}+\beta_k e_k z)-U_{\lambda}(\bar{X}_{t^-})]\tilde{N}^{1,k}(dt,dz).
\end{eqnarray*}
Substituting this formula into equation (\ref{3.1}), it follows that
\begin{eqnarray*}
d\bar X_t=\!\!\!\!\!\!\!\!&&A\bar X_t dt+\lambda U_{\lambda}(\bar X_t)dt-dU_{\lambda}(\bar X_t)\\
&&+\sum_{k\geq 1}\int_\mathbb{R}\left[U_{\lambda}(\bar{X}_{t^-}+\beta_k e_k z)-U_{\lambda}(\bar{X}_{t^-})\right]\tilde{N}^{1,k}(dt,dz)+d L_t.
\end{eqnarray*}
By variation of constant method and integration by parts formula, we get
\begin{eqnarray*}
\bar X_t=\!\!\!\!\!\!\!\!&&e^{tA}(x+U_{\lambda}(x))+\int^t_0 e^{(t-s)A}\lambda U_{\lambda}(\bar X_s)ds-U_{\lambda}(\bar X_t)-\int^t_0 Ae^{(t-s)A}U_{\lambda}(\bar X_s)ds\nonumber\\
&&+\int^t_0 e^{(t-s)A}d L_s+\sum_{k\geq 1}\int^t_0 \int_\mathbb{R}e^{(t-s)A}\left[U_{\lambda}(\bar{X}_{s^-}+\beta_k e_k z)-U_{\lambda}(\bar{X}_{s^-})\right]\tilde{N}^{1,k}(ds,dz).
\end{eqnarray*}

Note that we can rewrite $X^{\varepsilon}_t$ as follows:
$$
dX^{\varepsilon}_t=A X^{\varepsilon}_tdt+\bar{B}(X^{\varepsilon}_t)dt+\left[B(X^{\varepsilon}_t, Y^{\varepsilon}_t)-\bar{B}(X^{\varepsilon}_t)\right]dt+d L_{t}.
$$
As the proof of \eref{FbarX}, we have
\begin{eqnarray*}
X^{\vare}_t=\!\!\!\!\!\!\!\!&&e^{tA}(x+U_{\lambda}(x))+\int^t_0 e^{(t-s)A}\lambda U_{\lambda}(X^{\vare}_s)ds-U_{\lambda}(X^{\vare}_t)-\int^t_0 Ae^{(t-s)A}U_{\lambda}(X^{\vare}_s)ds\nonumber\\
&&\!\!+\int^t_0 e^{(t-s)A}d L_s+\sum_{k\geq 1}\int^t_0 \int_\mathbb{R}e^{(t-s)A}\left[U_{\lambda}(X^{\varepsilon}_{s^-}+\beta_k e_k z)-U_{\lambda}(X^{\varepsilon}_{s^-})\right]\tilde{N}^{1,k}(ds,dz)\nonumber\\
&&\!\!+\int^t_0 e^{(t-s)A}\langle I+D U_{\lambda}(X^{\vare}_s), B(X^{\vare}_s, Y^{\vare}_s)-\bar{B}(X^{\vare}_s)\rangle ds.
\end{eqnarray*}
The proof is complete.
\end{proof}
\begin{remark}
Note that the non-regular drift $B$ has been removed in \eref{FbarX} and several new terms appear, which will be proved to be Lipschitz continuous. Although that the last term in \eref{FX} is still non-regular, it is possible to be handled by Khasminskii's time discreatization and the exponential ergodicity of the transition semigroup of the frozen equation.
\end{remark}

\vskip 0.3cm

\subsection{The Khasminkii's time discretization}

Following the idea in \cite{K1}, we introduce an auxiliary process $(\hat{X}_{t}^{\varepsilon},\hat{Y}_{t}^{\varepsilon})\in{H}\times H$ and divide $[0,T]$ into intervals of size $\delta$, where $\delta$ is a fixed positive number depending on $\vare$ and will be chosen later.

We construct a process $\hat{Y}_{t}^{\varepsilon}$, with $\hat{Y}_{0}^{\varepsilon}=Y^{\varepsilon}_{0}=y$, and for any $k\in \mathbb{N}$ and $t\in[k\delta,(k+1)\delta\wedge T]$,
\begin{eqnarray*}
\hat{Y}_{t}^{\varepsilon}=\hat Y_{k\delta}^{\varepsilon}+\frac{1}{\varepsilon}\int_{k\delta}^{t}A\hat{Y}_{s}^{\varepsilon}ds+\frac{1}{\varepsilon}\int_{k\delta}^{t}
F(X_{k\delta}^{\varepsilon},\hat{Y}_{s}^{\varepsilon})ds+\frac{1}{\varepsilon^{1/\alpha}}\int_{k\delta}^{t}dZ_s,
\end{eqnarray*}
which satisfies
$$
\hat{Y}_{t}^{\vare}=e^{tA/\vare}y+\frac{1}{\vare}\int_0^te^{(t-s)A/\vare}F\left(X^{\vare}_{s(\delta)},\hat{Y}_{s}^{\vare}\right)ds+\frac{1}{\varepsilon^{1/\alpha}}\int_0^te^{(t-s)A/\vare}dZ_s\quad t\in [0,T].
$$
We also construct another auxiliary process $\hat{X}_{t}^{\vare}\in H$ by the following way:
\begin{eqnarray}
\hat X^{\vare}_t:=\!\!\!\!\!\!\!\!&&e^{tA}(x+U_{\lambda}(x))+\int^t_0 e^{(t-s)A}\lambda U_{\lambda}(X^{\vare}_s)ds-U_{\lambda}(X^{\vare}_t)-\int^t_0 Ae^{(t-s)A}U_{\lambda}(X^{\vare}_s)ds\nonumber\\
&&\!\!+\int^t_0 e^{(t-s)A}dL_s+\sum_{k\geq 1}\int^t_0\int_\mathbb{R} e^{(t-s)A}[U_{\lambda}(X^\varepsilon_{s^{-}}+\beta_k z e_k)-U_{\lambda}(X^\varepsilon_{s^{-}})]\tilde{N}^{1,k}(ds,dz)\nonumber\\
&&\!\!+\int^t_0 e^{(t-s(\delta))A}\langle D U_{\lambda}(X^{\vare}_{s(\delta)})+I, B(X^{\vare}_{s(\delta)}, \hat Y^{\vare}_s)-\bar{B}(X^{\vare}_{s(\delta)})\rangle ds.\label{AMX}
\end{eqnarray}

\begin{lemma} \label{DEY}
For any $T>0$ and $\theta\in (0,2/\alpha)$, there exists a constant $C_{T}>0$ such that
\begin{eqnarray*}
\mathbb{E}\left(\int_0^{T}|Y_{t}^{\varepsilon}-\hat{Y}_{t}^{\varepsilon}|dt\right)\leq C_{T}(1+|x|)\delta^{\frac{\theta\eta_3}{2}}.
\end{eqnarray*}
\end{lemma}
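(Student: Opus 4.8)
The plan is to compare the true fast process $Y^{\vare}_t$ with the frozen-type auxiliary process $\hat Y^{\vare}_t$ by estimating their difference $Z^{\vare}_t := Y^{\vare}_t-\hat Y^{\vare}_t$ directly from the mild formulations. Subtracting, the two stochastic convolution terms against $dZ_s$ cancel, so
\begin{eqnarray*}
Z^{\vare}_t=\frac{1}{\vare}\int^t_0 e^{(t-s)A/\vare}\big[F(X^{\vare}_s,Y^{\vare}_s)-F(X^{\vare}_{s(\delta)},\hat Y^{\vare}_s)\big]ds.
\end{eqnarray*}
Then I would split $F(X^{\vare}_s,Y^{\vare}_s)-F(X^{\vare}_{s(\delta)},\hat Y^{\vare}_s) = [F(X^{\vare}_s,Y^{\vare}_s)-F(X^{\vare}_{s(\delta)},Y^{\vare}_s)] + [F(X^{\vare}_{s(\delta)},Y^{\vare}_s)-F(X^{\vare}_{s(\delta)},\hat Y^{\vare}_s)]$ and invoke \ref{A4}: the first bracket is bounded by $C|X^{\vare}_s-X^{\vare}_{s(\delta)}|^{\eta_3}$, while the second is bounded by $L_F|Z^{\vare}_s|$, using that $F$ is Lipschitz in the second variable. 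Using $\|e^{(t-s)A/\vare}\|\leq e^{-\lambda_1(t-s)/\vare}$ from \eref{SP1}, I obtain the pathwise bound
\begin{eqnarray*}
|Z^{\vare}_t|\leq \frac{C}{\vare}\int^t_0 e^{-\lambda_1(t-s)/\vare}|X^{\vare}_s-X^{\vare}_{s(\delta)}|^{\eta_3}ds+\frac{L_F}{\vare}\int^t_0 e^{-\lambda_1(t-s)/\vare}|Z^{\vare}_s|ds.
\end{eqnarray*}
A Gronwall argument (using $\lambda_1-L_F>0$ from \ref{A4}, which controls the exponential kernel after absorbing the $L_F$-term) then yields $|Z^{\vare}_t|\leq \frac{C}{\vare}\int^t_0 e^{-(\lambda_1-L_F)(t-s)/\vare}|X^{\vare}_s-X^{\vare}_{s(\delta)}|^{\eta_3}ds$.

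Next I would integrate over $[0,T]$, take expectations, and apply Fubini together with Jensen's inequality to pass the power $\eta_3\in(0,1)$ inside: since $\eta_3<1$, $\EE|X^{\vare}_s-X^{\vare}_{s(\delta)}|^{\eta_3}\leq \big(\EE|X^{\vare}_s-X^{\vare}_{s(\delta)}|\big)^{\eta_3}$. The $ds$-integral of the kernel $\frac{1}{\vare}e^{-(\lambda_1-L_F)(t-s)/\vare}$ over $t$ is uniformly bounded in $\vare$, so
\begin{eqnarray*}
\EE\int^T_0|Z^{\vare}_t|dt\leq C_T\int^T_0\big(\EE|X^{\vare}_s-X^{\vare}_{s(\delta)}|\big)^{\eta_3}ds.
\end{eqnarray*}
Finally I would feed in Lemma \ref{COX}: that lemma gives $\EE\big[\int^T_0|X^{\vare}_t-X^{\vare}_{t(\delta)}|dt\big]^{p}\leq C_{p,T}(1+|x|^p)\delta^{p\theta/2}$ for $1\leq p<\alpha$, hence for $p=1$, and combined with an $L^p$-type uniform bound on $X^{\vare}_s-X^{\vare}_{s(\delta)}$ from the estimates \eref{F3.1} and Lemma \ref{SOX} this controls $\big(\EE|X^{\vare}_s-X^{\vare}_{s(\delta)}|\big)^{\eta_3}$ by $C_T(1+|x|)\delta^{\theta\eta_3/2}$ after an integration in $s$, producing the claimed rate $\delta^{\theta\eta_3/2}$.

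The main obstacle I anticipate is the bookkeeping needed to legitimately pull the $\eta_3$-power outside the expectation while still extracting the $\delta$-rate from Lemma \ref{COX}: Lemma \ref{COX} bounds the time-integral $\EE\big[\int_0^T|X^{\vare}_t-X^{\vare}_{t(\delta)}|dt\big]^{p}$, not the pointwise quantity $\EE|X^{\vare}_s-X^{\vare}_{s(\delta)}|$ for fixed $s$, so I must be careful about the order of the $ds$-integration, the expectation, and the power $\eta_3$ — most cleanly by first doing the Gronwall estimate pathwise, then integrating in $t$, then applying Fubini and Jensen in the form $\big(\int_0^T\EE|\cdot|\,ds\big)^{\eta_3}\cdot T^{1-\eta_3}\geq \int_0^T(\EE|\cdot|)^{\eta_3}ds$ is the wrong direction, so instead I would keep the structure $\EE\int_0^T|Z^{\vare}_t|\,dt \le C_T\,\EE\int_0^T|X^{\vare}_s-X^{\vare}_{s(\delta)}|^{\eta_3}\,ds \le C_T\big(\EE\int_0^T|X^{\vare}_s-X^{\vare}_{s(\delta)}|\,ds\big)^{\eta_3}T^{1-\eta_3}$ by Jensen applied to the finite measure $ds$ on $[0,T]$ and the probability measure $\PP$ jointly, and only then invoke Lemma \ref{COX} with $p=1$. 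A secondary technical point is checking that the Gronwall step is valid for the singularly-scaled convolution kernel, which is where the hypothesis $\lambda_1-L_F>0$ is essential.
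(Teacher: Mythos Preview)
Your proposal is correct and follows essentially the same route as the paper: subtract the mild formulations so the stochastic convolutions cancel, split $F(X^{\vare}_s,Y^{\vare}_s)-F(X^{\vare}_{s(\delta)},\hat Y^{\vare}_s)$ using \ref{A4}, use $\lambda_1-L_F>0$ to close the inequality, and finish with Jensen (concavity of $x\mapsto x^{\eta_3}$) on the product measure $\PP\otimes ds/T$ together with Lemma~\ref{COX} for $p=1$.

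The only methodological difference is the order of the two middle steps. You do a pointwise Gronwall first (obtaining the resolvent kernel $e^{-(\lambda_1-L_F)(t-s)/\vare}$) and then integrate in $t$; the paper instead integrates the pathwise bound in $t$ immediately, uses Fubini to evaluate $\frac{1}{\vare}\int_s^T e^{-\lambda_1(t-s)/\vare}\,dt\leq 1/\lambda_1$, and then simply subtracts the $L_F$ term algebraically, obtaining $(\lambda_1-L_F)\int_0^T|Y^{\vare}_t-\hat Y^{\vare}_t|\,dt\leq C\int_0^T|X^{\vare}_s-X^{\vare}_{s(\delta)}|^{\eta_3}\,ds$. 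The paper's ordering avoids the resolvent computation entirely, which addresses precisely the ``secondary technical point'' you flagged; your route works too but requires the extra justification. Your final Jensen step and invocation of Lemma~\ref{COX} match the paper exactly (the paper writes ``H\"older inequality'' where Jensen is meant).
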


\begin{proof}
By the construction of $Y_t^{\vare}$ and $\hat{Y}_t^{\vare}$, it is easy to see that
\begin{eqnarray*}
Y_t^{\vare}-\hat{Y}_t^{\vare}=\frac{1}{\vare}\int_0^t e^{(t-s)A/\vare}\left[F\left(X^{\vare}_s,Y_{s}^{\vare}\right)-F\left(X^{\vare}_{s(\delta)},\hat{Y}_{s}^{\vare}\right)\right]ds,
\end{eqnarray*}
which implies that for any $t\in [0,T]$,
\begin{eqnarray*}
|Y_t^{\vare}-\hat{Y}_t^{\vare}|\leq \frac{1}{\vare}\int_0^t e^{-\lambda_1(t-s)/\vare}\left[C|X^{\vare}_s-X^{\vare}_{s(\delta)}|^{\eta_3}+L_F|Y_{s}^{\vare}-\hat{Y}_{s}^{\vare}|\right]ds.
\end{eqnarray*}
By Fubini's theorem, we get
\begin{eqnarray*}
\int_0^T|Y_t^{\vare}-\hat{Y}_t^{\vare}|dt\leq\!\!\!\!\!\!\!\!&& \frac{1}{\vare}\int_0^T\int_0^t e^{-\lambda_1(t-s)/\vare}\left[C|X^{\vare}_s-X^{\vare}_{s(\delta)}|^{\eta_3}+L_F|Y_{s}^{\vare}-\hat{Y}_{s}^{\vare}|\right]dsdt\\
=\!\!\!\!\!\!\!\!&& \frac{1}{\vare}\int_0^T\int_s^T e^{-\lambda_1(t-s)/\vare}\left[C|X^{\vare}_s-X^{\vare}_{s(\delta)}|^{\eta_3}+L_F|Y_{s}^{\vare}-\hat{Y}_{s}^{\vare}|\right]dtds\\
\leq\!\!\!\!\!\!\!\!&& \frac{1}{\lambda_1}\int_0^T\left[C|X^{\vare}_s-X^{\vare}_{s(\delta)}|^{\eta_3}+L_F|Y_{s}^{\vare}-\hat{Y}_{s}^{\vare}|\right]ds.
\end{eqnarray*}
Note that $\lambda_1-L_F>0$ in the assumption \ref{A4}, H\"{o}lder inequality and Lemma \ref{COX} yield that
\begin{eqnarray*}
\EE\left(\int_0^T|Y_t^{\vare}-\hat{Y}_t^{\vare}|dt\right)
\leq C\left(\EE\int_0^T|X^{\vare}_t-X^{\vare}_{t(\delta)}|dt\right)^{\eta_3}
\leq C_{T}(1+|x|)\delta^{\frac{\theta\eta_3}{2}}.
\end{eqnarray*}
The proof is complete.
\end{proof}

\begin{lemma} \label{DEX} For any $T>0$, $p\geq 1$, there exists $C_{p,T}>0$ and $\tilde{\theta}>0$ such that
\begin{eqnarray}
\mathbb{E}\Big(\sup_{t\in [0, T]}|X_{t}^{\vare}-\hat{X}_{t}^{\vare}|^p\Big)\leq C_{p,T}(1+|x|)\delta^{\tilde{\theta}}.\label{F4.9}
\end{eqnarray}
\end{lemma}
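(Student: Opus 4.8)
Since the first six terms on the right-hand sides of the Zvonkin representations \eref{FX} and \eref{AMX} coincide term by term, only the two last integrals differ. Writing
\[
\Psi(s):=\big\langle I+DU_{\lambda}(X^{\vare}_s),\,B(X^{\vare}_s,Y^{\vare}_s)-\bar B(X^{\vare}_s)\big\rangle,\qquad
\hat\Psi(s):=\big\langle I+DU_{\lambda}(X^{\vare}_{s(\delta)}),\,B(X^{\vare}_{s(\delta)},\hat Y^{\vare}_s)-\bar B(X^{\vare}_{s(\delta)})\big\rangle,
\]
the plan is to start from the identity
\[
X^{\vare}_t-\hat X^{\vare}_t=\underbrace{\int_0^t e^{(t-s)A}\big(\Psi(s)-\hat\Psi(s)\big)\,ds}_{=:J_1(t)}\;+\;\underbrace{\int_0^t\big(e^{(t-s)A}-e^{(t-s(\delta))A}\big)\hat\Psi(s)\,ds}_{=:J_2(t)}
\]
and to estimate $\EE\sup_{t\in[0,T]}|J_1(t)|^p$ and $\EE\sup_{t\in[0,T]}|J_2(t)|^p$ separately. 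The decisive gain of the transformation is that the only drift surviving in $J_1$ and $J_2$ is the \emph{bounded} fluctuation $B-\bar B$, together with $DU_{\lambda}$, which by Lemma \ref{ppp} is bounded and H\"older continuous and satisfies \eref{BU2}; in particular $\Psi$ and $\hat\Psi$ are bounded by a constant, a fact I will use repeatedly to replace every H\"older factor $|\cdot|^{\beta}$ below by its truncation $(1\wedge|\cdot|)^{\beta}$.

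For $J_2(t)$ I would split $\hat\Psi(s)$ into the ``$I$-part'' $B(X^{\vare}_{s(\delta)},\hat Y^{\vare}_s)-\bar B(X^{\vare}_{s(\delta)})$, which is bounded in $H$, and the ``$DU_{\lambda}$-part'' $DU_{\lambda}(X^{\vare}_{s(\delta)})\big(B(X^{\vare}_{s(\delta)},\hat Y^{\vare}_s)-\bar B(X^{\vare}_{s(\delta)})\big)$, which by \eref{BU2} is bounded uniformly in $H^{2\kappa_1}$. Since $e^{(t-s(\delta))A}-e^{(t-s)A}=e^{(t-s)A}\big(e^{(s-s(\delta))A}-I\big)$ with $0\leq s-s(\delta)\leq\delta$, the smoothing estimates \eref{SP2}--\eref{SP3} give $\|e^{(t-s(\delta))A}-e^{(t-s)A}\|\leq C\delta^{\theta/2}(t-s)^{-\theta/2}$ on $H$ and the bound $|(e^{(t-s(\delta))A}-e^{(t-s)A})w|\leq C\delta^{\kappa_1}\|w\|_{2\kappa_1}$ on $H^{2\kappa_1}$; since $\theta<2/\alpha<2$ the factor $(t-s)^{-\theta/2}$ is integrable, so $\sup_{t\in[0,T]}|J_2(t)|\leq C_T\delta^{\kappa_1\wedge(\theta/2)}$ holds pathwise, whence $\EE\sup_{t\in[0,T]}|J_2(t)|^p\leq C_{p,T}\delta^{p(\kappa_1\wedge\theta/2)}$.

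For $J_1(t)$, since $\|e^{(t-s)A}\|\leq1$ it suffices to bound $\EE\big(\int_0^T|\Psi(s)-\hat\Psi(s)|\,ds\big)^p$, and I would decompose $\Psi(s)-\hat\Psi(s)$ into $\big[DU_{\lambda}(X^{\vare}_s)-DU_{\lambda}(X^{\vare}_{s(\delta)})\big]\big(B(X^{\vare}_s,Y^{\vare}_s)-\bar B(X^{\vare}_s)\big)$ plus $\big(I+DU_{\lambda}(X^{\vare}_{s(\delta)})\big)$ applied to
\[
\big[B(X^{\vare}_s,Y^{\vare}_s)-B(X^{\vare}_{s(\delta)},Y^{\vare}_s)\big]+\big[B(X^{\vare}_{s(\delta)},Y^{\vare}_s)-B(X^{\vare}_{s(\delta)},\hat Y^{\vare}_s)\big]+\big[\bar B(X^{\vare}_{s(\delta)})-\bar B(X^{\vare}_s)\big].
\]
Using the H\"older continuity of $DU_{\lambda}$ with exponent $\gamma+\theta'-1\in(0,1]$ (from $U_{\lambda}\in C^{\gamma+\theta'}_b$, $\gamma>1$), assumption \ref{A4}, the H\"older continuity \eref{HObarB} of $\bar B$, and the boundedness of $B$, $DU_{\lambda}$, $\bar B$, each summand is dominated either by $C\big(1\wedge|X^{\vare}_s-X^{\vare}_{s(\delta)}|\big)^{\beta}$ for a fixed $\beta\in\{\gamma+\theta'-1,\;\eta_1,\;\eta_1\wedge(\eta_2\eta_3)\}\subset(0,1]$ or by $C\big(1\wedge|Y^{\vare}_s-\hat Y^{\vare}_s|\big)^{\eta_2}$. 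It then remains to prove the elementary estimate that, for $\beta\in(0,1]$, $p\geq1$ and any nonnegative process $\zeta$ with $\EE\int_0^T\zeta_s\,ds\leq M$, one has $\EE\big(\int_0^T(1\wedge\zeta_s)^{\beta}\,ds\big)^p\leq C_{p,T}\,M^{\min(1,\beta p)}$ --- this follows from Jensen's inequality in $s$, then lowering the power $\beta p$ to $\min(1,\beta p)$ via the pathwise bound $\int_0^T(1\wedge\zeta_s)\,ds\leq T$, together with Jensen's inequality in $\omega$ when $\beta p<1$. Feeding in Lemma \ref{COX} with $p=1$ (so $M=C_T(1+|x|)\delta^{\theta/2}$) and Lemma \ref{DEY} (so $M=C_T(1+|x|)\delta^{\theta\eta_3/2}$), and using $\min(1,\beta p)\geq\beta$ for $p\geq1$, yields $\EE\sup_{t\in[0,T]}|J_1(t)|^p\leq C_{p,T}(1+|x|)\delta^{\tilde\theta_1}$ with, e.g., $\tilde\theta_1=\tfrac{\theta}{2}\big((\gamma+\theta'-1)\wedge\eta_1\wedge(\eta_2\eta_3)\big)>0$; combining with the $J_2$-bound gives \eref{F4.9} with $\tilde\theta=\tilde\theta_1\wedge\kappa_1$.

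I expect the last point to be the only genuine obstacle. Because the $\alpha$-stable driving noise endows the slow component $X^{\vare}$ --- hence also the increments $X^{\vare}_s-X^{\vare}_{s(\delta)}$ --- with moments only of order strictly below $\alpha$, one cannot simply raise Lemma \ref{COX} to an arbitrary power $p$; the truncation-and-trading-down device is precisely what reduces every term to the $L^1$-estimates of Lemmas \ref{COX} and \ref{DEY}, and so produces \eref{F4.9} for all $p\geq1$ at the harmless cost of a smaller exponent $\tilde\theta$. (Note that the ergodicity of the frozen semigroup is not needed at this stage; it enters only in the subsequent comparison of $\hat X^{\vare}$ with $\bar X$.)
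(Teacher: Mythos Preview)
Your proof is correct and follows essentially the same route as the paper: subtract the two Zvonkin representations, use the semigroup estimate for the $e^{(t-s)A}-e^{(t-s(\delta))A}$ piece, the H\"older continuity of $DU_\lambda$, $B$, $\bar B$ for the remaining increments, and reduce to the $L^1$-in-time estimates of Lemmas \ref{COX} and \ref{DEY}. Two minor remarks. First, your detour through $H^{2\kappa_1}$ for the $DU_\lambda$-part of $J_2$ is unnecessary: since $\hat\Psi(s)$ is bounded in $H$, the same estimate $\|e^{(t-s)A}-e^{(t-s(\delta))A}\|\leq C\delta^{\theta/2}(t-s)^{-\theta/2}$ handles both parts at once (this is what the paper does, with $\theta=1$), and avoids the spurious $\wedge\kappa_1$ in your final exponent. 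Second, your explicit ``truncation-and-trading-down'' lemma is exactly the mechanism the paper uses implicitly when it passes directly from $\big(\int_0^T|\cdot|\,ds\big)^p$ to $C_{p,T}\EE\int_0^T|\cdot|^\beta\,ds$ by invoking boundedness; you have spelled out cleanly what the paper leaves to the reader, and your identification of this step as the genuine obstacle (forced by the lack of high moments under $\alpha$-stable noise) is spot on.
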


\begin{proof}
Note that the boundedness and H\"{o}lder continuous of $DU_{\lambda}$, $B$ and $\bar{B}$. Then it follows from \eref{FX} and \eref{AMX} we have
\begin{eqnarray*}
|X^{\vare}_t-\hat{X}^{\vare}_t|\leq\!\!\!\!\!\!\!\!&&\Big|\int^t_0 \!\!e^{(t-s)A}\langle D U_{\lambda}(X^{\vare}_{s}), B(X^{\vare}_{s}, Y^{\vare}_s)-\bar{B}(X^{\vare}_{s})\rangle\\
&&\quad\quad-e^{(t-s(\delta))A}\langle D U_{\lambda}(X^{\vare}_{s(\delta)}), B(X^{\vare}_{s(\delta)}, \hat Y^{\vare}_s)-\bar{B}(X^{\vare}_{s(\delta)})\rangle ds\Big|\nonumber\\
&&\!\!+\left|\int^t_0 e^{(t-s)A}\left[B(X^{\vare}_{s},  Y^{\vare}_s)-\bar{B}(X^{\vare}_{s})\right]-e^{(t-s(\delta))A}\left[B(X^{\vare}_{s(\delta)}, \hat Y^{\vare}_s)-\bar{B}(X^{\vare}_{s(\delta)})\right] ds\right|\\
\leq\!\!\!\!\!\!\!\!&&C\int^t_0 \!\!\|e^{(t-s)A}-e^{(t-s(\delta))A}\|ds+C\int^t_0\|D U_{\lambda}(X^{\vare}_s)-DU_{\lambda}(X^{\vare}_{s(\delta)})\|ds\\
&&+C\int^t_0 \left|B(X^{\vare}_s, Y^{\vare}_s)-\bar{B}(X^{\vare}_s)-B(X^{\vare}_{s(\delta)},\hat Y^{\vare}_s)+\bar{B}(X^{\vare}_{s(\delta)})\right|ds.
\end{eqnarray*}
Using \eref{BU} and the boundedness and H\"{o}lder continuous of $DU_{\lambda}$, $B$ and $\bar{B}$ again,  we get for any $T>0$, $p\geq 1$ and $\theta'\in(0,\eta_1\wedge(\eta_2\eta_3))$,
\begin{eqnarray*}
\EE\left(\sup_{t\in[0, T]}|X^{\vare}_t-\hat{X}^{\vare}_t|^p\right)
\leq\!\!\!\!\!\!\!\!&&C_p\delta^{ p/2}\left(\int^T_0 s^{-1/2}ds\right)^p+C_{p,T}\EE\int^T_0|X^{\vare}_s-X^{\vare}_{s(\delta)}|^{\gamma+\theta'-1} ds\\
&&+C_{p,T}\EE\int^T_0|X^{\vare}_s-X^{\vare}_{s(\delta)}|^{\eta_1}ds+C_{p,T}\EE\int^T_0|X^{\vare}_s-X^{\vare}_{s(\delta)}|^{\eta_1\wedge(\eta_2\eta_3)}ds\\
&&+C_{p,T}\EE\int^T_0|Y^{\vare}_s-\hat Y^{\vare}_s|^{\eta_2}ds.
\end{eqnarray*}
By Lemmas \ref{COX} and \ref{DEY}, we get for any $\theta\in (0,2/\alpha)$,
$$
\EE\left(\sup_{t\in[0, T]}|X^{\vare}_t-\hat{X}^{\vare}_t|^p\right)\leq C_{p,T}(1+|x|)\delta^{\frac{\theta[\eta_1\wedge(\eta_2\eta_3)\wedge(\gamma+\theta'-1)]}{2}}.
$$
Hence, \eref{F4.9} holds by taking $\tilde{\theta}=\frac{\theta[\eta_1\wedge(\eta_2\eta_3)\wedge(\gamma+\theta'-1)]}{2}$. The proof is complete.
\end{proof}

\subsection{The main result }

Now, we give a position to state our main result.
\begin{theorem}\label{main result 1}
Assume that the assumptions \ref{A1}-\ref{A4} hold. Then for any $x, y\in H$, $p>0$ and $T>0$, we have
\begin{align}
\lim_{\vare\rightarrow 0}\mathbb{E} \left(\sup_{t\in[0,T]}|X_{t}^{\vare}-\bar{X}_{t}|^{p} \right)=0, \label{MR}
\end{align}
where $\bar{X}_t$ is the solution of the averaged equation \eref{3.1}.
\end{theorem}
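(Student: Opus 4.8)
The plan is to combine the Zvonkin reformulations \eref{FbarX}, \eref{FX}, \eref{AMX} with the auxiliary estimates of Lemmas \ref{DEY}--\ref{DEX} and the exponential ergodicity of the frozen semigroup (Proposition \ref{Ergodicity}), in the spirit of Khasminskii's time discretization. First I would fix, once and for all, a large $\lambda$ so that (by \eref{BU}) $\|U_\lambda\|_{C^1_b}\le\frac12$ and all the $U_\lambda$-dependent constants appearing below are as small as required, and I would choose $\theta'$ close to $\eta_1\wedge(\eta_2\eta_3)$ with $\gamma+\theta'>\alpha$ (as arranged before \eref{FbarX}). It suffices to prove, for every $p\in[1,\alpha)$, an estimate
$$\EE\Big(\sup_{t\in[0,T]}|X_t^{\vare}-\bar X_t|^p\Big)\le C_{p,T}(1+|x|+|y|)^{C_p}\big(\delta^{a_p}+(\vare/\delta)^{b_p}\big),\qquad \delta\in(0,1),$$
with $a_p,b_p>0$: then \eref{MR} for $p\in[1,\alpha)$ follows by choosing $\delta=\delta(\vare)\to0$ with $\vare/\delta\to0$ (e.g. $\delta=\sqrt{\vare}$) and letting $\vare\to0$, and the range $p\in(0,1)$ follows by Jensen's inequality. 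Since $\sup_t|X_t^{\vare}-\bar X_t|\le\sup_t|X_t^{\vare}-\hat X_t^{\vare}|+\sup_t|\hat X_t^{\vare}-\bar X_t|$ and the first term is $O(\delta^{\tilde\theta})$ by Lemma \ref{DEX}, everything reduces to estimating $\hat X^{\vare}-\bar X$.

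Next I would subtract \eref{FbarX} from \eref{AMX}. The crucial point is that the initial term $e^{tA}(x+U_\lambda(x))$ and the heavy-tailed stochastic convolution $\int_0^te^{(t-s)A}dL_s$ cancel exactly, leaving only differences of the $U_\lambda$-terms plus the averaging term $\hat J_t^{\vare}:=\int_0^te^{(t-s(\delta))A}\langle DU_\lambda(X_{s(\delta)}^{\vare})+I,\,B(X_{s(\delta)}^{\vare},\hat Y_s^{\vare})-\bar B(X_{s(\delta)}^{\vare})\rangle\,ds$. Among the $U_\lambda$-terms: $\lambda\int_0^te^{(t-s)A}[U_\lambda(X_s^{\vare})-U_\lambda(\bar X_s)]\,ds$ is a linear Gronwall term; $[U_\lambda(X_t^{\vare})-U_\lambda(\bar X_t)]$ is absorbed using $\|U_\lambda\|_{C^1_b}\le\frac12$; the a priori singular term $\int_0^tAe^{(t-s)A}[U_\lambda(X_s^{\vare})-U_\lambda(\bar X_s)]\,ds$ is rewritten as $\int_0^t(-A)^{1-\kappa_1}e^{(t-s)A}(-A)^{\kappa_1}[U_\lambda(X_s^{\vare})-U_\lambda(\bar X_s)]\,ds$ and, using that $x\mapsto(-A)^{\kappa_1}U_\lambda(x)$ is Lipschitz by \eref{BU2} together with \eref{SP2}, bounded by $C\int_0^t(t-s)^{-(1-\kappa_1)}|X_s^{\vare}-\bar X_s|\,ds$, a weakly singular but integrable kernel; finally the compensated Poisson integral of the $U_\lambda$-increments is a martingale whose jumps vanish when $X_{s^-}^{\vare}=\bar X_{s^-}$, and combining the bounds $|\cdot|\le 2\|U_\lambda\|_{C^1_b}|X_{s^-}^{\vare}-\bar X_{s^-}|$ and $|\cdot|\le C\|U_\lambda\|_{C^{\gamma+\theta'}_b}|X_{s^-}^{\vare}-\bar X_{s^-}|^{\gamma+\theta'-1}\beta_k|z|$ with $\sum_k\beta^{\alpha}_k<\infty$ (Assumption \ref{A2}) and the Burkholder--Kunita inequalities, this term contributes a constant multiple of $\|U_\lambda\|_{C^{\gamma+\theta'}_b}^{\alpha}$ times an integral of sub-linear powers of $|X_s^{\vare}-\bar X_s|$. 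Since that prefactor is small (choose $\lambda$ large), a generalized Gronwall inequality absorbs this contribution together with the weakly singular linear term and gives $\EE\sup_{t\le T}|X_t^{\vare}-\bar X_t|^p\le C_{p,T}\big(\EE\sup_{t\le T}|X_t^{\vare}-\hat X_t^{\vare}|^p+\EE\sup_{t\le T}|\hat J_t^{\vare}|^p\big)$.

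The heart of the proof is then the bound $\EE\big(\sup_{t\in[0,T]}|\hat J_t^{\vare}|^p\big)\le C_{p,T}(1+|x|+|y|)^{C_p}\big(\delta^{a_p}+(\vare/\delta)^p\big)$. Since $s\mapsto e^{(t-s(\delta))A}$ is constant on each interval $[k\delta,(k+1)\delta)$ and $\|e^{\tau A}\|\le e^{-\lambda_1\tau}$, writing $\zeta_k:=\int_{k\delta}^{(k+1)\delta}\langle DU_\lambda(X_{k\delta}^{\vare})+I,\,B(X_{k\delta}^{\vare},\hat Y_s^{\vare})-\bar B(X_{k\delta}^{\vare})\rangle\,ds$ one reduces to estimating $\sup_m\big|\sum_{k<m}e^{(m-k)\delta A}\zeta_k\big|$, and I would split $\zeta_k=\EE[\zeta_k\mid\mathscr{F}_{k\delta}]+\big(\zeta_k-\EE[\zeta_k\mid\mathscr{F}_{k\delta}]\big)$. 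On $[k\delta,(k+1)\delta)$, after the time change $t\mapsto t/\vare$, $\hat Y^{\vare}$ solves the frozen equation \eref{FEQ1} with slow value $X_{k\delta}^{\vare}$ started from $\hat Y_{k\delta}^{\vare}$ and driven by a noise independent of $\mathscr{F}_{k\delta}$, so $\EE[B(X_{k\delta}^{\vare},\hat Y_s^{\vare})\mid\mathscr{F}_{k\delta}]=\big(P^x_{(s-k\delta)/\vare}B(x,\cdot)\big)(\hat Y_{k\delta}^{\vare})\big|_{x=X_{k\delta}^{\vare}}$; Proposition \ref{Ergodicity} applied with the $\eta_2$-H\"older function $\varphi=B(x,\cdot)$ gives $|\EE[\zeta_k\mid\mathscr{F}_{k\delta}]|\le C(1+|\hat Y_{k\delta}^{\vare}|^{\eta_2})\int_{k\delta}^{(k+1)\delta}e^{-(\lambda_1-L_F)\eta_2(s-k\delta)/(2\vare)}\,ds\le C\vare(1+|\hat Y_{k\delta}^{\vare}|^{\eta_2})$. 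Summing against the geometric weights $e^{-\lambda_1(m-k)\delta}$, taking $\sup_m$ and $p$-th moments, and using $\sup_{\vare,t}\EE|\hat Y_t^{\vare}|^{\eta_2 p}<\infty$ (proved as in \eref{ess}, since $\eta_2 p<\alpha$), the conditional-mean part contributes $O((\vare/\delta)^p)$; the increments $\zeta_k-\EE[\zeta_k\mid\mathscr{F}_{k\delta}]$ form a bounded martingale-difference sequence ($|\cdot|\le C\delta$), so the weighted partial sums are handled by an Abel summation together with Doob's and Burkholder's inequalities applied to the plain martingale $\sum_{k<m}(\zeta_k-\EE[\zeta_k\mid\mathscr{F}_{k\delta}])$, yielding $O(\delta^{a_p})$ for some $a_p>0$ (up to a logarithmic factor, absorbed into $a_p$). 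This completes the estimate of $\hat J^{\vare}$, hence \eref{MR} for $p\in[1,\alpha)$ and, by Jensen, for all $p\in(0,\alpha)$.

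Finally, for $p\ge\alpha$ I would argue by uniform integrability. Although $(X^{\vare},Y^{\vare})$ has only moments of order $<\alpha$, the difference $X^{\vare}-\bar X$ has finite moments of \emph{every} order, uniformly in $\vare\in(0,1)$: subtracting \eref{FbarX} from \eref{FX}, the heavy-tailed convolution $\int_0^te^{(t-s)A}dL_s$ cancels, the surviving stochastic term is the compensated Poisson integral of $U_\lambda$-increments whose jumps are bounded (by $2\|U_\lambda\|_{\infty}$) and, being increments of a $C^{\gamma+\theta'}_b$ function with $\gamma+\theta'>\alpha$, integrable to every order against $\nu(dz)=c_\alpha|z|^{-1-\alpha}dz$, while the remaining terms are either Lipschitz in $|X_s^{\vare}-\bar X_s|$ (up to weakly singular kernels) or bounded deterministically; a weakly singular Gronwall argument then gives $\sup_{\vare\in(0,1)}\EE\sup_{t\le T}|X_t^{\vare}-\bar X_t|^q<\infty$ for every $q\ge1$. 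Together with the $L^{p_0}$-convergence established above (for, say, $p_0=1$), which forces $\sup_t|X_t^{\vare}-\bar X_t|\to0$ in probability, this uniform bound yields uniform integrability of $(\sup_{t\le T}|X_t^{\vare}-\bar X_t|)^p$ and hence \eref{MR} for every $p>0$. \emph{The main obstacle} is precisely the estimate of $\EE\sup_t|\hat J_t^{\vare}|^p$: marrying Khasminskii's discretization to the exponential ergodicity in the presence of the semigroup weight $e^{(t-s(\delta))A}$ inside the time integral (which defeats the naive bound and forces the discrete martingale-transform argument), with only moments of order $<\alpha$ a priori available for the fast component. A secondary difficulty is that the mere H\"older regularity of $B$, $\bar B$ and $DU_\lambda$ produces sub-linear powers of $|X_s^{\vare}-\bar X_s|$ in the resulting Gronwall inequalities, which can only be absorbed after first making the $U_\lambda$-dependent prefactors small via \eref{BU}.
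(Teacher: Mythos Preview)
Your overall architecture is the paper's: Zvonkin reformulations \eref{FbarX}, \eref{FX}, \eref{AMX}, cancellation of $e^{tA}(x+U_\lambda(x))$ and $\int_0^t e^{(t-s)A}dL_s$, five residual terms, exponential ergodicity for the averaging term, and a Gronwall closure after making $\tilde C_\lambda$ small. The substantive divergence is the choice of exponent: you restrict to $p\in[1,\alpha)$ and then bootstrap upward via uniform integrability, whereas the paper proves the estimate directly for \emph{large} $p$ and lets small $p$ follow trivially (since $X^{\vare}-\bar X$ is bounded, as you yourself observe in the last paragraph). This reversal is not innocuous; it is the source of the soft spots in your sketch.

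Concretely, two of the five terms need $p$ large in the paper's argument. For the singular drift term $\int_0^t Ae^{(t-s)A}[U_\lambda(X^{\vare}_s)-U_\lambda(\bar X_s)]ds$ the paper uses the factorization method, which requires $\frac{p(1-\kappa_2)}{p-1}<1$ and $\frac{p(1-\kappa_1+\kappa_2)}{p-1}<1$; your alternative via \eref{BU2} and a weakly singular Gronwall is plausible, but passing from the pointwise Henry-type inequality to a bound on $\EE\sup_{t\le T}$ while the stochastic term is also present is not addressed. More seriously, for the compensated Poisson term the paper invokes the maximal inequality of \cite[Proposition~3.3]{MPR} (which is stated for $p\ge 2$) and, after the scaling substitution $z\mapsto z/\beta_k$, relies on the integrability $\int_{|z|\le 1}|z|^{p(\gamma+\theta'-1)}\nu(dz)<\infty$, i.e.\ $p(\gamma+\theta'-1)>\alpha$; with $\gamma+\theta'-1<1$ this forces $p>\alpha$, not $p<\alpha$. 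Your treatment of this term --- combining the two increment bounds, asserting an output of the form $\|U_\lambda\|_{C^{\gamma+\theta'}_b}^{\alpha}$ times ``sub-linear powers'' and absorbing it because the prefactor is small --- is the weakest link: the exponent $\alpha$ on $\|U_\lambda\|$ is unexplained, the interpolation you write has the r\^oles of $|X-\bar X|$ and $|z|$ swapped relative to the paper's \eref{POU} (both are valid, but yours makes the small-$z$ integral easier at the cost of the sum over $k$ and the large-$z$ tail), and a Gronwall argument with genuinely sub-linear powers does not ``absorb'' anything by making a prefactor small.

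The fix is simple and is exactly what the paper does: run the whole argument for $p$ large enough that both the factorization exponents and the maximal-inequality conditions are satisfied. The extra step you spend showing $\sup_{\vare}\EE\sup_t|X_t^{\vare}-\bar X_t|^q<\infty$ for all $q$ is already the reason this is legitimate; once you have convergence in $L^{p}$ for one large $p$, convergence in $L^{q}$ for every $q>0$ is immediate. Your uniform-integrability paragraph, applied in this direction, collapses to a single sentence (cf.\ the paper's closing remark).
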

\begin{proof}
Using \eref{FbarX} and \eref{AMX}, it is easy to see that
\begin{eqnarray*}
\hat{X}^{\vare}_{t}-\bar{X}_{t}=\!\!\!\!\!\!\!\!&&\int^t_0 e^{(t-s)A}\lambda (U_{\lambda}(X^{\vare}_s)-U_{\lambda}(\bar X_s))ds+U_{\lambda}(\bar{X}_t)-U_{\lambda}(X^{\vare}_t)\\
&&+\int^t_0 Ae^{(t-s)A}\left[U_{\lambda}(\bar{X}_{s})-U_{\lambda}(X^{\vare}_{s})\right]ds\\
&&+\sum_{k\geq 1}\int^t_0\!\int_{\RR} e^{(t-s)A}\Big[U_{\lambda}(X^\varepsilon_{s-}\!+\!\beta_k z e_k)-U_{\lambda}(X^\varepsilon_{s-})\\
&&\quad\quad\quad\quad\quad\quad-U_{\lambda}(\bar{X}_{s-}\!+\!\beta_k z e_k)\!+\!U_{\lambda}(\bar{X}_{s^{-}})\Big]\!\tilde{N}^{1,k}(ds,dz)\\
&&+\int^t_0 e^{(t-s(\delta))A}\langle DU_{\lambda}(X^{\vare}_{s(\delta)})+I, B(X^{\vare}_{s(\delta)},  \hat Y^{\vare}_s)-\bar{B}(X^{\vare}_{s(\delta)})\rangle ds.
\end{eqnarray*}
Then for any $p\geq 1$, we have the following estimate:
\begin{eqnarray}
&&\EE\left(\sup_{t\in[0, T]}|\hat{X}^{\vare}_{t}-\bar{X}_{t}|^p\right)\nonumber\\
\leq \!\!\!\!\!\!\!\!&& C_p\lambda^p T\EE\int^T_0 |U_{\lambda}(X^{\vare}_s)-U_{\lambda}(\bar X_s)|^pds+C_p\EE\left(\sup_{t\in[0, T]}|U_{\lambda}(X^{\vare}_t)-U_{\lambda}(\bar X_t)|^p\right)\nonumber\\
&&+C_p\EE\left(\sup_{t\in[0, T]}\left|\int^t_0 Ae^{(t-s)A}\Big(U_{\lambda}(X^{\vare}_s)-U_{\lambda}(\bar X_s)\Big)ds\right|^p\right)\nonumber\\
&&+C_p\EE\bigg(\sup_{t\in[0, T]}\bigg|\sum_{k\geq 1}\int^t_0\int_\mathbb{R} e^{(t-s)A}\big[U_{\lambda}(X^\varepsilon_{s^{-}}\!+\!\beta_k z e_k)\!-\!U_{\lambda}(X^\varepsilon_{s-})\nonumber\\
&&\quad\quad\quad\quad\quad\quad-U_{\lambda}(\bar{X}_{s-}\!+\!\beta_k z e_k)\!+\!U_{\lambda}(\bar{X}_{s-})\big]\tilde{N}^{1,k}(ds,dz)\bigg|^p\bigg)\nonumber\\
&&+C_p\EE\left(\sup_{t\in[0, T]}\left|\int^t_0 e^{(t-s(\delta))A}\langle DU_{\lambda}(X^{\vare}_{s(\delta)})+I, B(X^{\vare}_{s(\delta)},  \hat Y^{\vare}_s)-\bar{B}(X^{\vare}_{s(\delta)})\rangle ds\right|^p\right)\nonumber\\
:=\!\!\!\!\!\!\!\!&&\sum^5_{i=1}J_i(T).\label{3.26}
\end{eqnarray}

By \eref{BU} we have
\begin{eqnarray}
J_1(T)&\leq C_{p,T}\lambda^p \tilde{C}^p_{\lambda}\|\bar B\|^p_{C^{\eta_1\wedge(\eta_2\eta_3)}_b}\int^T_0\EE|X^{\vare}_s-\bar X_s|^p ds\label{J1}
\end{eqnarray}
and
\begin{eqnarray}
J_2(T)&\leq C_p \tilde{C}^p_{\lambda}\|\bar B\|^p_{C^{\eta_1\wedge(\eta_2\eta_3)}_b}\EE\left(\sup_{t\in [0,T]}|X^{\vare}_t-\bar X_t|^p\right).\label{J2}
\end{eqnarray}

Using the factorization method and taking a constant $\kappa_2\in(0,\kappa_1)$, where $\kappa_1$ is the one in the assumption \ref{A3}, we can write
$$
\int^t_0 Ae^{(t-s)A}(U_{\lambda}\left(X^{\vare}_s)-U_{\lambda}(\bar X_s)\right)ds=\frac{\sin( \pi\kappa_2)}{\pi}\int^t_0 e^{(t-s)A}(t-s)^{\kappa_2-1}f_sds,
$$
where $f_s:=\int^s_0 Ae^{(s-r)A}(s-r)^{-\kappa_2}(U_{\lambda}(X^{\vare}_r)-U_{\lambda}(\bar X_r))dr$. Taking $p$ large enough such that $\frac{p(1-\kappa_2)}{p-1}<1$, we have
\begin{eqnarray*}
\left|\int^t_0 Ae^{(t-s)A}(U_{\lambda}(X^{\vare}_s)-U_{\lambda}(\bar X_s))ds\right|\leq\!\!\!\!\!\!\!\!&&C\left(\int^t_0 (t-s)^{-\frac{p(1-\kappa_2)}{p-1}}ds\right)^{\frac{p-1}{p}}\|f\|_{L^{p}([0,T]; H)}\\
\leq\!\!\!\!\!\!\!\!&&C_{p}t^{\kappa_2-\frac{1}{p}}\|f\|_{L^{p}([0,T]; H)},
\end{eqnarray*}
where $\|f\|_{L^{p}([0,T]; H)}:=\left(\int^T_0 |f_t|^p_H dt\right)^{1/p}$. Then it follows
\begin{eqnarray*}
\sup_{0 \leq t\leq T}\left|\int^t_0 Ae^{(t-s)A}(U_{\lambda}(X^{\vare}_s)-U_{\lambda}(\bar X_s))ds\right|^{p}\leq\!\!\!\!\!\!\!\!&&C_{p,T}|f|^{p}_{L^{p}([0,T]; H)}.
\end{eqnarray*}
Choosing $p$ large enough such that $\frac{p(1-\kappa_1+\kappa_2)}{p-1}<1$, \eref{BU2} yields that
\begin{eqnarray}
J_3(T)\leq\!\!\!\!\!\!\!\!&&C_{p,T}\EE\int^T_0 |f_t|^p dt\nonumber\\
\leq\!\!\!\!\!\!\!\!&&C_{p,T}\EE\int^T_0\!\!\Big[\left(\int^t_0 \|(-A)^{1-\kappa_1}e^{(t-r)A}(t-r)^{-\kappa_2}\|^{\frac{p}{p-1}}dr\right)^{p-1}\nonumber\\
&&\quad\quad\quad\quad\quad \cdot\int^t_0 \|(-A)^{\kappa_1}DU_{\lambda}\|^p_{\infty} |X^{\vare}_r-\bar X_r|^p dr \Big]dt\nonumber\\
\leq\!\!\!\!\!\!\!\!&&C_{p,T}\|\bar{B}\|_{C^{\eta_1\wedge(\eta_2\eta_3)}_b}\left(\int^T_0 t^{-\frac{p(1-\kappa_1+\kappa_2)}{p-1}}dt\right)^{p-1}\int^T_0\EE|X^{\vare}_t-\bar{X}_t|^p dt\nonumber\\
\leq\!\!\!\!\!\!\!\!&&C_{p,T}\|\bar{B}\|_{C^{\eta_1\wedge(\eta_2\eta_3)}_b}\int^T_0\EE|X^{\vare}_t-\bar X_t|^pdt.\label{J3}
\end{eqnarray}

For the term $J_4(T)$, by Maximal inequality (see e.g., \cite[Proposition 3.3]{MPR}), we get
\begin{eqnarray*}
J_4(T)
\leq\!\!\!\!\!\!\!\!&&C_p\EE\int^T_0\sum_{k\geq 1}\Big[\int_\mathbb{R} |U_{\lambda}(X^\varepsilon_{s^{-}}\!+\!\beta_k z e_k)\!
-\!U_{\lambda}(X^\varepsilon_{s})-U_{\lambda}(\bar{X}_{s}\!+\!\beta_k z e_k)\!+\!U_{\lambda}(\bar{X}_{s})|^p\nu(dz)\\
&&\quad+\big(\int_\mathbb{R} |U_{\lambda}(X^\varepsilon_{s}\!+\!\beta_k z e_k)\!-\!U_{\lambda}(X^\varepsilon_{s-})-U_{\lambda}(\bar{X}_{s}\!+\!\beta_k z e_k)\!+\!U_{\lambda}(\bar{X}_{s})|^2\nu(dz)\big)^{p/2}
\Big]ds\\
=\!\!\!\!\!\!\!\!&&C_p\EE\int^T_0\sum_{k\geq 1}\Big[\beta^{\alpha}_k\int_\mathbb{R} |U_{\lambda}(X^\varepsilon_{s}\!+z e_k)\!
-\!U_{\lambda}(X^\varepsilon_{s})-U_{\lambda}(\bar{X}_{s}\!+ze_k)\!+\!U_{\lambda}(\bar{X}_{s})|^p\nu(dz)\\
&&\quad+\beta^{(\alpha p)/2}_k\big(\int_\mathbb{R} |U_{\lambda}(X^\varepsilon_{s}\!+z e_k)\!-\!U_{\lambda}(X^\varepsilon_{s})-U_{\lambda}(\bar{X}_{s}\!+z e_k)\!+\!U_{\lambda}(\bar{X}_{s})|^2\nu(dz)\big)^{p/2}
\Big]ds\\
\leq\!\!\!\!\!\!\!\!&&J_{41}(T)+J_{42}(T),
\end{eqnarray*}
where
\begin{eqnarray*}
J_{41}(T):=\!\!\!\!\!\!\!\!&&C_p\EE\int^T_0\sum_{k\geq 1}\Big[\beta^{\alpha}_k\int_{|z|>1}|U_{\lambda}(X^\varepsilon_{s}\!+z e_k)\!
-\!U_{\lambda}(X^\varepsilon_{s})-U_{\lambda}(\bar{X}_{s}\!+z e_k)\!+\!U_{\lambda}(\bar{X}_{s})|^p\nu(dz)\\
&&\quad+\beta^{(\alpha p)/2}_k\big(\int_{|z|> 1} |U_{\lambda}(X^\varepsilon_{s}\!+z e_k)\!-\!U_{\lambda}(X^\varepsilon_{s-})-U_{\lambda}(\bar{X}_{s}\!+z e_k)\!+\!U_{\lambda}(\bar{X}_{s})|^2\nu(dz)\big)^{p/2}\Big]ds,
\end{eqnarray*}
\begin{eqnarray*}
J_{42}(T):=\!\!\!\!\!\!\!\!&&C_p\EE\int^T_0\sum_{k\geq 1}\Big[\beta^{\alpha}_k\int_{|z|\leq1} |U_{\lambda}(X^\varepsilon_{s}\!+z e_k)\!
-\!U_{\lambda}(X^\varepsilon_{s})-U_{\lambda}(\bar{X}_{s}\!+ z e_k)\!+\!U_{\lambda}(\bar{X}_{s})|^p\nu(dz)\\
&&\quad+\beta^{(\alpha p)/2}_k\big(\int_{|z|\leq 1} |U_{\lambda}(X^\varepsilon_{s}\!+ z e_k)\!-\!U_{\lambda}(X^\varepsilon_{s})-U_{\lambda}(\bar{X}_{s}\!+z e_k)\!+\!U_{\lambda}(\bar{X}_{s})|^2\nu(dz)\big)^{p/2}\Big]ds.
\end{eqnarray*}
On one hand, by the fact that $\|DU_{\lambda}\|_{\infty}<\infty$, we have
\begin{eqnarray}
J_{41}(T):=\!\!\!\!\!\!\!\!&&C_p\EE\int^T_0\sum_{k\geq 1}\Big[\beta^{\alpha}_k|X^\varepsilon_{s}-\bar{X}_{s}|^p\nu(|z|>1)+\beta^{(\alpha p)/2}_k|X^\varepsilon_{s}-\bar{X}_{s}|^p\left[\nu(|z|>1)\right]^{p/2}
\Big]ds\nonumber\\
\leq\!\!\!\!\!\!\!\!&&C_p\sum_{k\geq 1}(\beta^{\alpha}_k+\beta^{(\alpha p)/2}_k)\int^T_0\EE|X^\varepsilon_{s}-\bar{X}_{s}|^p ds.\label{J41}
\end{eqnarray}
On the other hand, refer to \cite[Lemma 4.3]{SXX0}, we have for any $\theta'\in (0, \eta_1\wedge(\eta_2\eta_3))$,
\begin{eqnarray}
|U_{\lambda}(x+z)-U_{\lambda}(x)-U_{\lambda}(y+z)+U_{\lambda}(y)|\leq |x-y||z|^{\gamma+\theta'-1}\|U_{\lambda}\|_{\gamma+\theta'}.\label{POU}
\end{eqnarray}
By $\eta_1\wedge(\eta_2\eta_3)\in (1+\alpha/2-\gamma,1)$, we can choose $\theta'$ close enough to $\eta_1\wedge(\eta_2\eta_3)$ such that
\begin{eqnarray*}
p(\gamma+\theta'-1)>\alpha,\quad \forall p\geq 2.
\end{eqnarray*}
Then by \eref{BU} and \eref{POU}, we get
\begin{eqnarray}
J_{42}(T):=\!\!\!\!\!\!\!\!&&C_p\|\bar{B}\|^p_{C^{\eta_1\wedge(\eta_2\eta_3)}_b}\EE\int^T_0\sum_{k\geq 1}\Big[\beta^{\alpha}_k|X^\varepsilon_{s}-\bar{X}_{s}|^p\int_{|z|\leq 1}|z|^{p(\gamma+\theta'-1)}\nu(dz)\nonumber\\
&&\quad\quad\quad+\beta^{(\alpha p)/2}_k|X^\varepsilon_{s}-\bar{X}_{s}|^p\big(\int_{|z|\leq 1}|z|^{2(\gamma+\theta'-1)}\nu(dz)\big)^{p/2}
\Big]ds\nonumber\\
\leq\!\!\!\!\!\!\!\!&&C_p\|\bar{B}\|^p_{C^{\eta_1\wedge(\eta_2\eta_3)}_b}\sum_{k\geq 1}(\beta^{\alpha}_k+\beta^{(\alpha p)/2}_k)\int^T_0\EE|X^\varepsilon_{s}-\bar{X}_{s}|^p ds.\label{J42}
\end{eqnarray}
The assumption \ref{A2}, \eref{J41} and \eref{J42} imply that
\begin{eqnarray}
J_4(T)
\leq\!\!\!\!\!\!\!\!&&C_{p}\left[\|\bar{B}\|^p_{C^{\eta_1\wedge(\eta_2\eta_3)}_b}+1\right]\int^T_0\EE|X^\varepsilon_{s}-\bar{X}_{s}|^p ds.\label{J4}
\end{eqnarray}

For term $J_5(T)$. By Proposition \ref{Ergodicity} and Khasminkii's time discretization argument, we can easily obtain
\begin{eqnarray}
J_{5}(T)\leq\!\!\!\!\!\!\!\!&&C_{p,T}(1+|y|^{\eta_2})\left(\frac{\vare^2}{\delta^2}+\frac{\vare}{\delta}+\delta^{2}\right).\label{J5}
\end{eqnarray}
Note that the proof of \eref{J5} follows the same steps as in the proof of \cite[Step 2 of Theorem 2.3]{SXX1}, so we omit the detailed proof here.

Hence, for $p$ large enough, \eref{J1}-\eref{J3}, \eref{J4} and \eref{J5} yield that
\begin{align*}
\EE\left(\sup_{t\in[0,T]}|X^{\vare}_t-\bar X_t|^p\right)\leq & C_p\EE\left(\sup_{t\in[0,T]}|X^{\vare}_t-\hat X^{\vare}_t|^p\right)+C_p\EE\left(\sup_{t\in[0,T]}|\hat X^{\vare}_t-\bar X_t|^p\right)\\
\leq & C_{p} \tilde{C}^p_{\lambda}\|\bar B\|^p_{C^{\eta_1\wedge(\eta_2\eta_3)}_b}\EE\left(\sup_{t\in [0,T]}|X^{\vare}_t-\bar X_t|^p\right)\\
&+C_{p,T}(\lambda^p\tilde{C}^p_{\lambda}\|\bar{B}\|_{C^{\eta_1\wedge(\eta_2\eta_3)}_b}+1)\int^T_0\EE\left(\sup_{s\in[0,t]}|X^{\vare}_s-\bar X_s|^p\right)dt\\
&+C_p\EE\left(\sup_{t\in[0,T]}|X^{\vare}_t-\hat X^{\vare}_t|^p\right)+C_{p,T}(1+|y|^{\eta_2})\left(\frac{\vare^2}{\delta^2}+\frac{\vare}{\delta}+\delta^{2}\right).
\end{align*}
Since $\lim_{\lambda\rightarrow\infty}\tilde{C}_\lambda=0$ , taking $\lambda$ sufficient large such that $C_{p} \tilde{C}^p_{\lambda}\|\bar B\|^p_{C^{\eta_1\wedge(\eta_2\eta_3)}_b}\leq 1/2$,
then by estimate \eref{DEX}, we have
\begin{eqnarray*}
\EE\left(\sup_{t\in[0,T]}|X^{\vare}_t-\bar X_t|^p\right)\leq\!\!\!\!\!\!\!\!&&(\lambda^p\tilde{C}^p_{\lambda}\|\bar{B}\|_{C^{\eta_1\wedge(\eta_2\eta_3)}_b}+1)\int^T_0\EE\left(\sup_{s\in[0,t]}|X^{\vare}_s-\bar X_s|^p\right)dt\\
&&+C_{p,T}(1+|x|)\delta^{\tilde{\theta}}+C_{p,T}(1+|y|^{\eta_2})\left(\frac{\vare^2}{\delta^2}+\frac{\vare}{\delta}+\delta^{2}\right).
\end{eqnarray*}
By Gronwall's inequality and taking $\delta=\vare^{1/2}$, we final obtain
\begin{eqnarray*}
\EE\left(\sup_{t\in[0,T]}|X^{\vare}_t-\bar X_t|^p\right)\leq\!\!\!\!\!\!\!\!&&C_{p,T,\lambda}(1+|x|+|y|^{\eta_2})\left(\vare^{\tilde{\theta}/2}+\vare^{1/4}\right),
\end{eqnarray*}
which implies the main result. The proof is complete.
\end{proof}
\begin{remark}
Note that \eref{MR} holds for any $p>0$, which seems a little  strange due to the solutions $X_{t}^{\vare}$ and $\bar{X}_{t}$ only have finite $p$-th moment for $p\in (0,\alpha)$.
In fact, by the formulation of the mild solution, it is easy to see that
$$
X_{t}^{\vare}-\bar{X}_{t}=\int^t_0e^{(t-s)A}B(X^{\varepsilon}_s, Y^{\varepsilon}_s)ds-\int^t_0e^{(t-s)A}\bar{B}(\bar{X}_s)ds.
$$
Note that the boundedness of $B$ and $\bar{B}$ in $H$, it is east check that $X_{t}^{\vare}-\bar{X}_{t}$ is still bounded in $H$.
Consequently, $\mathbb{E} \left(\sup_{t\in[0,T]}|X_{t}^{\vare}-\bar{X}_{t}|^{p} \right)$ makes sense for any $p>0$. However, by the reason of the techniques used in the proof,
it is necessary to prove \eref{MR} for $p$ is large enough.
\end{remark}

\section{Application to example}\label{Sec Exs}

In this section we will apply our main result to establish the averaging principle for a class of slow-fast SPDEs with H\"{o}lder continuous coefficients driven by $\alpha$-stable process.
i.e., considering the following non-linear stochastic heat equation on $D=[0,\pi]$ with Dirichlet boundary assumptions:
\begin{equation}\left\{\begin{array}{l}\label{Ex}
\displaystyle
dX^{\varepsilon}(t,\xi)=\left[\Delta X^{\varepsilon}(t,\xi)
+B(X^{\varepsilon}(t,\cdot), Y^{\varepsilon}(t,\cdot))(\xi)\right]dt+dL(t,\xi),\vspace{2mm}\\
\displaystyle dY^{\varepsilon}(t,\xi)=\frac{1}{\varepsilon}\left[\Delta Y^{\varepsilon}(t,\xi)+F(X^{\varepsilon}(t,\cdot), Y^{\varepsilon}(t,\cdot))(\xi)\right]dt
+\frac{1}{\varepsilon^{1/\alpha}}dZ(t,\xi),\vspace{2mm}\\
X^{\vare}(t,\xi)=Y^{\vare}(t,\xi)=0, \quad t> 0,\quad \xi\in\partial D,\vspace{2mm}\\
X^{\vare}(0,\xi)=x(\xi),Y^{\vare}(0,\xi)=y(\xi)\quad \xi\in D, x,y\in H,\end{array}\right.
\end{equation}
where $\partial D$ the boundary of $D$, $L_t$ and $Z_t$ are two cylindrical $\alpha$-stable processes with $\alpha\in(3/2,2)$. The coefficients which are bounded and measurable satisfy for any $x_1,x_2,y_1,y_2\in H$,
$$|B(x_1, y_1)(\xi)-B(x_2, y_2)(\xi)|\leq C\left(|x_1(\xi)-x_2(\xi)|^{\eta_1} + |y_1(\xi)-y_2(\xi)|^{\eta_2}\right),$$
$$|F(x_1, y_1)(\xi)-F(x_2, y_2)(\xi)|\leq C|x_1(\xi)-x_2(\xi)|^{\eta_3}+ L_F|y_1(\xi)-y_2(\xi)|.$$

Put Laplacian $\Delta$ in $H:=L^2(D)$ (with Dirichlet boundary assumptions):
$$Ax=\Delta x,\quad x\in \mathscr{D}(A)=H^{2}(D) \cap H^1_0(D).$$
Then the operator $A$ is a self-adjoint operator and possesses a complete orthonormal system of eigenfunctions, namely
$$e_n(\xi)=(\sqrt{2/\pi})\sin(n\xi),\quad\xi\in[0,\pi], n\in \mathbb{N}_{+}.$$
The corresponding eigenvalues of $A$ are $-\lambda_n=-n^{2}$. It is easy to prove that the assumption \ref{A1} holds.

Set $\beta_{n}=\gamma_{n}:=C\lambda^{-r}_{n}$ ($n\in\mathbb{N}_{+}$) with $r\in\left(\frac{1}{2\alpha}, \frac{\alpha-1}{\alpha}\right)$. It is easy to see that
$$\sum_{n\geq1}\beta^{\alpha}_n<\infty, \sum_{n\geq1}\gamma^{\alpha}_n<\infty,\quad \sum_{n\geq1}\frac{1}{\lambda_n}<\infty.$$
Thus assumption \ref{A2} holds.

In this case, we have for any $\kappa_1\in (0, \frac{\alpha-\alpha r-1}{\alpha})$,
\begin{eqnarray*}
\Lambda_t= \sup\limits_{n\geq1}\frac{e^{-\lambda_n t}\lambda^{1/\alpha}_n}{\beta_n}\leq\frac{C}{t^{r+1/\alpha}},\quad \Lambda_{3,\kappa_1}(t)= \sup\limits_{n\geq1}\frac{e^{-\lambda_n t}\lambda^{\kappa_1+1/\alpha}_n}{\gamma_n}\leq\frac{C}{t^{r+\kappa_1+1/\alpha}},\quad t>0.
\end{eqnarray*}
Moreover, we can take $\gamma=\frac{\alpha}{\alpha r+1}\in(1,\alpha)$ to see that for any $\gamma'<\gamma$,
\begin{eqnarray*}
\int_0^\infty\e^{-\lambda t}\Lambda_t^{\gamma'} dt<\infty,\quad \int_0^\infty\e^{-\lambda t}\Lambda_{3,\kappa_1}(t) dt<\infty,
\end{eqnarray*}
which verity assumption \ref{A3}.

Taking H\"{o}lder indexes $\eta_1,\eta_2,\eta_3\in (0,1)$ satisfying $\eta_1\wedge(\eta_2\eta_3)\in \left(1-\frac{\alpha(1-\alpha r)}{2(\alpha r+1)},1\right)$ and $\lambda_1-L_F>0$, then assumption \ref{A4} holds.
Hence, by Theorem \ref{main result 1}, the slow component $X^{\vare}$ of the stochastic system \eref{Ex} strongly convergence to the solution $\bar{X}$ of the corresponding averaged equation.

\vspace{0.5cm}
\textbf{Acknowledgment}. This work was supported in part by the National
Natural Science Foundation of China (Grant No.~11771187, 11931004, 12090011), Graduate Research and Innovation Program in Jiangsu Province (Grant No.~KYCX$20_{-}2204$) and the Priority Academic Program Development of Jiangsu Higher Education Institutions.

\vspace{0.3cm}

\end{document}